\newcommand\f{\mathsf{F}}
\newcommand\R{\mathbb{R}}
\newcommand\C{\mathbb{C}}
\newcommand\F{\mathbb{F}}
\newcommand\E{\mathbb{E}}
\renewcommand\P{\mathbb{P}}
\newcommand\A{\boldsymbol{A}}
\newcommand\K{\boldsymbol{K}}
\newcommand\M{\boldsymbol{M}}
\renewcommand\S{\boldsymbol{S}}
\newcommand\PP{\boldsymbol{P}}
\newcommand\EE{\boldsymbol{E}}
\newcommand\Q{\boldsymbol{Q}}
\newcommand\U{\boldsymbol{U}}
\newcommand\I{\boldsymbol{I}}
\renewcommand\b{\boldsymbol{b}}
\newcommand\e{\boldsymbol{e}}
\newcommand\x{\boldsymbol{x}}
\newcommand\y{\boldsymbol{y}}
\newcommand\z{\boldsymbol{z}}
\newcommand\q{\boldsymbol{q}}
\newcommand\aalpha{\boldsymbol{\alpha}}
\newcommand\0{\boldsymbol{0}}
\newcommand\cur{\mathsf{cur}}
\newcommand\nnz{\operatorname{\textsc{nnz}}}
\newcommand\tr{\operatorname{tr}}
\newcommand\Blk{\ensuremath{\boldsymbol{D}}}
\newcommand\RCM{\ensuremath{\boldsymbol{R}}}
\newcommand\Stab{\operatorname{Stab}}
\title{A Randomized Algorithm for\\ Preconditioner Selection
}
\author{Conner DiPaolo\thanks{Author was with the Department of Mathematics, Harvey Mudd College during the progress of this research. He is now with Yelp Inc., San Francisco, California (\url{conner@yelp.com}).} \and Weiqing Gu\thanks{Department of Mathematics, Harvey Mudd College, Claremont, California (\url{gu@hmc.edu}).}}
\begin{document}

\maketitle

\begin{abstract}
    The task of choosing a preconditioner $\M$ to use when solving a linear system $\A\x=\b$ with iterative methods is difficult. For instance, even if one has access to a collection $\M_1,\M_2,\ldots,\M_n$ of candidate preconditioners, it is currently unclear how to practically choose the $\M_i$ which minimizes the number of iterations of an iterative algorithm to achieve a suitable approximation to $\x$. This paper makes progress on this sub-problem by showing that the preconditioner stability $\|\I-\M^{-1}\A\|_\mathsf{F}$, known to forecast preconditioner quality, can be computed in the time it takes to run a constant number of iterations of conjugate gradients through use of sketching methods. This is in spite of folklore which suggests the quantity is impractical to compute, and a proof we give that ensures the quantity could not possibly be approximated in a useful amount of time by a \emph{deterministic} algorithm. Using our estimator, we provide a method which can provably select the minimal stability preconditioner among $n$ candidates using floating point operations commensurate with running on the order of $n\log n$ steps of the conjugate gradients algorithm. Our method can also advise the practitioner to use no preconditioner at all if none of the candidates appears useful. The algorithm is extremely easy to implement and trivially parallelizable. In one of our experiments, we use our preconditioner selection algorithm to create to the best of our knowledge the first preconditioned method for kernel regression reported to never use more iterations than the non-preconditioned analog in standard tests.
\end{abstract}

\begin{keywords}
    Preconditioning, Randomized Algorithms, Sketching Methods
\end{keywords}

\begin{AMS}
    65F08, 68W20, 68Q17, 62G08 
\end{AMS}

\section{Introduction}

A preconditioner $\M$ is helpful for solving a linear system $\A\x=\b$ via iterative methods if it reduces the number of iterations enough to offset the cost of constructing the preconditioner plus the additional cost of taking the iterations (typically an extra computation of the form $\M^{-1}\z$ per iteration.) Ignoring the latter component of this trade-off, this corresponds to having a small condition number $\kappa(\M^{-1}\A)$ for the conjugate gradient method \cite{trefethen1997numerical,deift2019conjugate}; a precise objective is unclear in the indefinite or unsymmetric case with general Krylov methods besides the general desire that the spectrum of $\M^{-1}\A$ be clustered \cite{benzi1999orderings}. For instance, even if we have a small number of candidate preconditioners $\M_1,\M_2,\ldots,\M_n$ for our problem ready to use and assume that they add the same amount of time to compute each iteration, it is unclear how one would go about estimating which preconditioner would reduce the iteration count the most without actually solving a system with each preconditioner or doing a comparable amount of work. This preconditioner selection task is the focus of the present work.

\subsection{Contributions}
\label{sub:contrib}

The core contribution of this work is the realization that randomized sketching methods make completely practical the computation of a helpful forecaster of preconditioner quality previously thought to be infeasible to compute.

In addition to this primary method for computing preconditioner stability, we provide a number of other results:

\begin{itemize}
    \item We prove that no practical deterministic algorithm, in a meaningful sense, could possibly be used to estimate the preconditioner stability $\|\I - \M^{-1}\A\|_\mathsf{F}$.
    \item We confirm the conjecture of \cite{avron2011randomized} regarding the true asymptotic sample complexity of the Gaussian trace estimator using a substantially more direct proof than the general result provided in \cite{wimmer2014optimal}, at the same time confirming the tightness of our stability estimation algorithm convergence bound.
    \item We provide a randomized algorithm which can provably select a preconditioner of approximately minimal stability among $n$ candidate preconditioners using computational resources equivalent to computing on the order of $n\log n$ steps of the conjugate gradients algorithm.
    \item We give a theoretical speedup to the initial preconditioner selection method which largely decouples the runtime dependence between the number of preconditioners $n$ and the desired accuracy in the situation that the input preconditioner stabilities satisfy an anti-concentration assumption.
    \item Using our initial preconditioner selection algorithm, we create the first (to the best of our knowledge) method for preconditioning in kernel regression which is reported to never give a worse number of iterations than using no preconditioner in standard tests.
\end{itemize}

It is important to point out that while our experiments consider positive definite systems and preconditioners, our algorithms and runtime bounds hold as-is for arbitrary matrices $\A$ and preconditioners $\M$.

\subsection{Prior Art}
\label{sub:priorwork}

Current approaches to forecasting preconditioner quality are not robust in the sense that they can fail to select a highly performant preconditioner in favor of a poorly performing one in many realistic scenarios. This lack of robustness, even in the case of positive definite $\A$, results in part from the fact that the research community does not have robust methods for estimating condition numbers of large sparse matrices, which makes proxies for preconditioner quality necessary. For instance, Avron et al. \cite{avron2019spectral} recently produced a condition number estimator in this setting which appears to perform admirably in many situations but does not always converge and at this point does not have rigorous theoretical backing.

The simplest forecasting criterion is that a preconditioner $\M$ ought to be an `accurate' approximation, in the sense that the Frobenius norm distance $\|\M - \A\|_\f$ is small. This preconditioner accuracy criterion $\|\M-\A\|_\f$ can be efficiently computed so long as one has access to the preconditioning matrix $\M$ in memory as well as the standard matrix-vector product access to $\M^{-1}$. Moreover, for symmetric $M$-matrices this accuracy criterion is a useful proxy for the number of conjugate gradient iterations necessary to solve the preconditioned system in $\A$. This point was theoretically confirmed by Axelsson and Eijkhout \cite{axelsson1990vectorizable}, who showed that the condition number $\kappa(\M^{-1}\A)$ can be bounded in terms of $\|\M^{-1}\|_\f\|\M-\A\|_\f$. The accuracy criterion for forecasting preconditioner quality was heavily tested on an empirical level in \cite{duff1989effect}.

Even in this setting, though, there exist accurate real-world preconditioners that give a poor iteration count because $\|\M^{-1}\|_\f$ is very large \cite{benzi1999orderings}. Unfortunately, to detect this issue one presumably needs to compute the entire matrix $\M^{-1}$ in memory solely from matrix-vector product access to $\M^{-1}$. The natural method for this, computing each of the $d$ columns $\M^{-1}\e_i$ from products with the standard basis vectors $\e_i$, takes the same leading-order floating point cost as solving the system $\A\x=\b$ via conjugate-gradients, at least when matrix-vector multiply access to $\M^{-1}$ is comparable to the matrix-vector multiply cost of multiplying by $\A$. Perhaps as a result of this intuition, computing $\|\M^{-1}\|_\f$, or rather its close cousin the $\ell^\infty$-norm $\|\M^{-1}\|_\infty$, was deemed impractical \cite{benzi1999orderings,benzi2002preconditioning}. In practice, Chow and Saad \cite{chow1997experimental} suggested using the lower bound $\|\M^{-1}\e\|_\infty$ for $\|\M^{-1}\|_\infty$ where $\e$ is the vector of all ones. This has proved helpful for detecting preconditioner instability in some situations, and especially indefinite systems \cite{benzi1999orderings}.

The quantity known as `preconditioner stability,' $\|\I - \M^{-1}\A\|_\f$, is empirically the most reliable indicator of preconditioner performance from this class of forecasts. This is especially true among many non-symmetric problems or problems which are far from diagonally dominant \cite{benzi1999orderings}. Unfortunately, prior work has suggested that computing preconditioner stability is `impractical' \cite{benzi2002preconditioning} for effectively the same reason as why $\|\M^{-1}\|_\infty$ was considered impractical to compute.

In spite of this, Tyrtyshnikov's influential paper \cite{tyrtyshnikov1992optimal} gave a fast algorithm to solve the harder problem considered in this paper of determining the minimal-stability preconditioner over the special class of \emph{circulant} matrices. This algorithm relied heavily on properties of these matrices.

Our results in this paper will suggest that even for generic $\A$ and $\M$, one can get an arbitrarily accurate estimate of most of these quantities previously thought to be unusable for preconditioner selection. That said, it is crucial to note that the prior impracticality assessment is completely valid in a certain regard: no deterministic algorithms, as we also show, could possibly even approximate the preconditioner stability, for example. Thus an algorithm must fail at some of these tasks with some nonzero probability.

\subsection{Overview}
\label{sub:overview}

Section \ref{sec:alg} motivates the need for a randomized algorithm for stability estimation with theory, responds with a sketching-based solution, and uses it to create and analyze a method for preconditioner selection. This theory is empirically confirmed in Section \ref{sec:experiments} where we apply the primary preconditioner selection algorithm from Section \ref{sec:alg} to solving generic real-world systems (Section \ref{sec:experiments_sparse}) and creating more robust preconditioned solvers for kernel regression (Section \ref{sec:experiments_kernel}.) After this we conclude in Section \ref{sec:conclusions} and discuss open problems that could provide an avenue for future work.

\subsubsection{Notation}
\label{sub:notation}

Boldface letters like $\x,\y,\b$ denote vectors while their upper-case analogues such as $\A,\PP,\Q$ denote matrices. Such matrices and vectors can be either random or deterministic. The underlying scalar field $\F$ is either the real numbers $\R$ or the complex numbers $\C$ unless otherwise specified. The adjoint of a matrix is denoted by $\A^*$. The inner product of two vectors $\x$ and $\y$ is denoted as $\x^*\y$. The expectation of a random variable will be denoted with $\E$, and the probability of an event $E$ is written $\P(E)$. The norms $\|\A\|_\f$ and $\|\x\|_2$ represent the Frobenius and Euclidean norms, respectively. The condition number $\kappa(\A) = \sigma_{\max}(\A) / \sigma_{\min}(\A)$ is the $\ell^2$ condition number. The notation $a_n \sim b_n$ represents $\lim_{n\to\infty} \tfrac{a_n}{b_n} = 1$. $\mathcal{N}(\boldsymbol{\mu},\boldsymbol{\Sigma}) = \mathcal{N}_\R(\boldsymbol{\mu},\boldsymbol{\Sigma})$ and $\mathcal{N}_\C(\boldsymbol{\mu},\boldsymbol{\Sigma})$ are normal and circularly-symmetric complex normal random variables, respectively.

\section{Algorithms}
\label{sec:alg}

This section contains all of our core theoretical results and algorithms. In Section \ref{sub:randomization_necessary} we show that the only algorithms which can possibly estimate preconditioner stability must be randomized. The follow-up question of whether randomization can indeed work is answered in the affirmative in Section \ref{sub:computing_stability}, where we show that a slight adaptation of a well-known sketching-based algorithm for computing Schatten norms perfectly fits our realistic access model to our preconditioner $\M$ and matrix $\A$. Once we have a good estimator of preconditioner stability, a natural method for selecting the candidate preconditioner with minimal stability criterion presents itself in Section \ref{sub:thealgorithm}. It turns our that our algorithm can be trivially parallelized, and a testament to this fact is given in Section \ref{sub:parallel}. In Section \ref{sub:upper_bound} we take advantage of highly informative results from the literature on trace estimation to provide useful approximation guarantees and runtime bounds for the previously presented algorithms. Section \ref{sec:lower_bound} returns the focus to Theorem \ref{thm:stab} of Section \ref{sub:upper_bound}, showing that the leading constant is tight and providing a more concise resolution of a conjecture from \cite{avron2011randomized} than the more generalized result from \cite{wimmer2014optimal}. In that section, we also comment that no randomized algorithm for estimating preconditioner stability which has access to matrix-vector products of the form $(\I-\M^{-1}\A)\z$ could possibly do better asymptotically than Algorithm \ref{alg:stab} by relying on this result \cite[Thm. 3]{wimmer2014optimal} from the trace estimation literature. Finally, this conversation is wrapped up in Section \ref{sub:winner}, where our bounds from Section \ref{sub:upper_bound} are utilized to provide a theoretical improvement to Algorithm \ref{alg:pick}.

\subsection{Randomization is Necessary to Compute Preconditioner Stability}
\label{sub:randomization_necessary}

This paper provides a simple randomized algorithm which can accurately estimate the preconditioner stability $\|\I-\M^{-1}\A\|_\f$ in time faster than running a constant number of iterations of preconditioned conjugate gradients with the matrix $\A$ and preconditioner $\M$. Through incorporating randomness, however, we must accept that the algorithm fails with some probability. This failure probability can be made arbitrarily small, but it would still be advantageous (for example, in mission-critical applications) to provide a deterministic algorithm for the same task with a comparable approximation guarantee. The purpose of this section is to crush that latter hope, and the following theorem does just that.

\begin{theorem}\label{thm:impossible}
    Fix some $0 \leq \epsilon < 1$. Suppose we have a deterministic algorithm which takes as input an arbitrary positive semi-definite matrix $\A\in\F^{d\times d}$ and positive definite matrix $\M\in\F^{d\times d}$, and returns an estimate $\textsc{Alg}(\A,\M)$ satisfying $$(1-\epsilon)\|\I - \M^{-1}\A\|_\f \leq \textsc{Alg}(\A,\M) \leq (1+\epsilon)\|\I - \M^{-1}\A\|_\f$$ after sequentially querying and observing matrix vector multiplies of the form\\ $(\M^{-1}\q_i,\A\q_i)$ for $i=1,2,\ldots,k$ where $k$ is a constant depending only on $d$ and $\epsilon$. Then $k\geq d$.
\end{theorem}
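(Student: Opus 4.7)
The plan is a classical adversary argument. Suppose for contradiction that some deterministic algorithm $\textsc{Alg}$ achieves the claimed guarantee using $k < d$ queries on every valid input. I will exhibit two inputs $(\A_1,\M_1)$ and $(\A_2,\M_2)$, both consistent with the hypotheses, on which $\textsc{Alg}$ receives identical responses to all of its queries (and therefore produces the same output) but whose preconditioner stabilities differ by more than the allowed relative error.

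The trivial instance is $(\A_1,\M_1)=(\I,\I)$, which is certainly PSD/PD and has stability $\|\I-\M_1^{-1}\A_1\|_\f=0$. Run $\textsc{Alg}$ on this instance and let $\q_1,\q_2,\ldots,\q_k$ be its (adaptively chosen) queries. Since every response is $(\M_1^{-1}\q_i,\A_1\q_i)=(\q_i,\q_i)$, the trajectory is well-defined, and the final output must equal $0$ to satisfy the $(1\pm\epsilon)$-approximation with target $0$. Set $V=\operatorname{span}(\q_1,\ldots,\q_k)$, so $\dim V\leq k<d$ and the orthogonal projector $P_{V^\perp}$ onto the complement is nonzero. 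Now define the alternative instance $\M_2=\I$ and $\A_2=P_V+\lambda P_{V^\perp}$ for some $\lambda>0$ to be chosen. Both $\A_2$ (PSD) and $\M_2$ (PD) are valid, and for every $\q\in V$ we have $\M_2^{-1}\q=\q$ and $\A_2\q=\q$.

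The key observation, which I would verify by induction on $i$, is that $\textsc{Alg}$ produces the exact same sequence of queries on $(\A_2,\M_2)$ as on $(\I,\I)$: the first query is fixed, and given identical responses $(\q_j,\q_j)$ for $j<i$, determinism forces the same $\q_i\in V$, which in turn yields the same response $(\q_i,\q_i)$ on instance~2. Hence $\textsc{Alg}(\A_2,\M_2)=\textsc{Alg}(\I,\I)=0$. But a direct computation gives
\begin{equation*}
    \|\I-\M_2^{-1}\A_2\|_\f=\|(1-\lambda)P_{V^\perp}\|_\f=|1-\lambda|\sqrt{d-\dim V}\geq |1-\lambda|,
\end{equation*}
which is positive once $\lambda\neq 1$. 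Choosing, say, $\lambda=2$ makes the true stability at least $1>0$, yet the algorithm must output a value in $[(1-\epsilon),(1+\epsilon)]$ times this positive quantity, which excludes $0$ whenever $\epsilon<1$. This contradicts the guarantee, so $k\geq d$ as claimed.

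The only subtle step is justifying that running $\textsc{Alg}$ on two different instances really does yield the same query sequence; this is precisely where determinism is used, and it is the reason the argument collapses for randomized algorithms. Everything else (choice of $V$, construction of $\A_2$, the Frobenius computation, and the incompatibility of $0$ with a positive $(1\pm\epsilon)$-target) is essentially mechanical.
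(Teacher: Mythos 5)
Your proof is correct and follows essentially the same adversary argument as the paper: fix $\M=\I$, run the algorithm on responses consistent with $\A=\I$, and then swap in a second PSD matrix that acts identically on the span of the queries but differs on its orthogonal complement, forcing the same output $0$ against a strictly positive true stability. The only cosmetic difference is that the paper takes the alternative matrix to be the orthogonal projection $\PP$ onto the query span itself (your $\lambda=0$ case) rather than your $P_V+2P_{V^\perp}$.
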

\begin{proof}
    Fix $\M=\I$ for the remainder of the proof. Suppose to the contrary that $k = d-1$ suffices to compute $\textsc{Alg}(\A,\M)$. Let $\q_1,\q_2,\ldots,\q_{d-1}$ be the query vectors used by the algorithm in the case that $(\I - \M^{-1}\A)\q_i$ always returns $\0$. Write $\PP$ for the orthogonal projection onto $\operatorname{span}\{\q_1,\q_2,\ldots,\q_{d-1}\}$. Both of the positive semi-definite matrices $\A=\I$ and $\A=\PP$ will return $(\I - \M^{-1}\I)\q_i=(\I-\M^{-1}\PP)\q_i=\0$ uniformly over $i=1,2,\ldots,d-1$, and thus since the algorithm is deterministic the estimated stabilities $\textsc{Alg}(\I,\M)=\textsc{Alg}(\PP,\M)$ are equal. But $\PP \neq \I$ since $\PP$ was an orthogonal projection onto a subspace of dimension strictly less than $d$, and hence
    \[
        0 < (1-\epsilon)\|\I-\PP\|_\f \leq \textsc{Alg}(\PP,\M) = \textsc{Alg}(\I,\M) \leq (1+\epsilon)\|\I-\I\|_\f = 0
    \]
    by our approximation guarantee. This contradiction ensures that we must take $k \geq d$. Since $\M=\I$, matrix-vector multiply access to $\I - \M^{-1}\A$ is equivalent via a bijection to matrix-vector multiply access to $\M^{-1}$ and $\A$, hence our strengthened statement of the result.
\end{proof}

Of course, using $k=d$ queries suffices to achieve no error at all, and so the above lower bound is tight:
\begin{equation}\label{eq:deterministic}
    \|\I - \M^{-1}\A\|_\f = \biggl(\sum_{i=1}^d \|(\I-\M^{-1}\A)\e_i\|_2^2\biggr)^{1/2}
\end{equation}
where $\e_1,\e_2,\ldots,\e_d$ is any orthonormal basis for $\F^d$. Also, note that the condition that $\A$ and $\M$ be positive semi-definite gives a stronger result than if they were allowed to be arbitrary matrices.

In order to put Theorem \ref{thm:impossible} into better context, recall that the dominant cost of an iteration of preconditioned conjugate gradients \cite[Alg. 11.5.1]{golub2012matrix} is (a) computing $\A\y$ for a vector $\y$, and (b) computing $\M^{-1}\z$ for a vector $\z$. Thus Theorem \ref{thm:impossible} says roughly that in the time it takes to even approximate $\|\I-\M^{-1}\A\|_\f$ deterministically, one can solve a system $\A\x=\b$ exactly (in exact arithmetic) by running the conjugate gradients algorithm for $d$ iterations. Since our whole goal of computing $\|\I-\M^{-1}\A\|_\f$ is to forecast how well $\M$ would do as a preconditioner for solving the system $\A\x=\b$, this means that any deterministic algorithm for this task is, in general, impractical for this task.

\subsection{Computing Preconditioner Stability via Randomization}
\label{sub:computing_stability}

\begin{algorithm2e}[t]
    \KwData{A matrix $\A\in\F^{d\times d}$, preconditioner $\M\in\F^{d\times d}$, and an accuracy parameter $k\in\{1,2,\ldots\}$.}
    \KwResult{A estimate of the preconditioner stability $\|\I - \M^{-1}\A\|_\f$.}

    Form a matrix $\Q = [\q_1,\ldots,\q_k]$ with independent columns $\q_i \sim \mathcal{N}_\F(\0,\tfrac{1}{k}\I_d)$.

    Construct the sketch $\S = (\I - \M^{-1}\A)\Q$ via its columns $\q_i - \M^{-1}(\A \q_i)$.

    Return $\|\S\|_\f$.

    \caption{$\Stab(\A,\M,k)$: Estimates the stability of the preconditioner $\M$ in $\sim 3dk + kT_{m} + 2k\nnz(\A)$ floating point operations when $\F=\R$ or $\sim 6dk + kT_{m} + 8k\nnz(\A)$ flops when $\F=\C$, where $T_{m}$ is the number of flops needed to compute $\M^{-1}\b$ for an arbitrary $\b\in\F^d$.}\label{alg:stab}
\end{algorithm2e}

Now we will demonstrate that, unlike the deterministic case, randomization makes it entirely practical to compute preconditioner stability. To see why this is intuitive, let $\q$ be a standard (real or complex) Gaussian vector. Then
\begin{align}
    \|\I - \M^{-1}\A\|_\f^2 &= \tr\bigl((\I - \M^{-1}\A)^*(\I-\M^{-1}\A)\bigr)\\
    &= \tr\bigl((\I - \M^{-1}\A)^*(\I-\M^{-1}\A)\E\q\q^*\bigr)\\
    &= \E\tr\bigl(\q^*(\I - \M^{-1}\A)^*(\I-\M^{-1}\A)\q\bigr)\\
    &= \E \|(\I - \M^{-1}\A)\q\|_2^2
\end{align}
by the linearity of expectation, the cyclic property of the trace, and the fact that $\E\q\q^* = \I$. Thus, if $\q_i$ are independent standard normal vectors for all $i=1,2,\ldots,k$, the Monte-Carlo squared stability estimate
\begin{equation}
    S^2 = \frac{1}{k}\sum_{i=1}^k \|(\I - \M^{-1}\A)\q_i\|_2^2 \to \|\I - \M^{-1}\A\|_\f^2
\end{equation}
almost surely as $k\to\infty$ by the strong law of large numbers. We can rewrite the above estimator as
\(
    S = \|(\I - \M^{-1}\A)\Q\|_\f
\)
where $\Q$ is a matrix with independent and identically distributed elements $\Q_{ii}\sim \mathcal{N}(0,\tfrac{1}{k})$. This stability estimation algorithm for $S = \sqrt{S^2} \approx \|\I - \M^{-1}\A\|_\f$ is given as Algorithm \ref{alg:stab}. In Algorithm \ref{alg:stab}, we adjust whether $\q$ is real or complex depending on  $\F$ for analysis reasons that will be apparent later in Section \ref{sub:upper_bound}.

It is important to note that the mathematical foundations of the above algorithm are not novel. It is equivalent in exact arithmetic to applying the trace estimators in \cite{roosta2015improved} to the matrix $(\I-\M^{-1}\A)^*(\I-\M^{-1}\A)$ and then taking the square root. It is also a simplification of the Schatten-$2$ norm estimator in \cite[Thm. 69]{woodruff2014sketching} (relayed from \cite{li2014sketching}) applied to $\I - \M^{-1}\A$. The reason we include Algorithm \ref{alg:stab} is not because of its mathematical novelty but because of its observational novelty: sketching algorithms using the matrix-vector multiply access model are a perfect fit for interrogating the matrices $\M^{-1}$ and $\A$ in the context of iterative methods for solving linear systems, since this kind of access to $\M^{-1}$ and $\A$ are precisely what make that kind of access practical.

Of course, the presentation thus far does not help us choose how large the accuracy parameter $k$ should be. To that end, Theorem \ref{thm:stab} in Section \ref{sub:upper_bound} presents a tight upper bound on the necessary $k$ to achieve a given error bound with high probability, although in practice just setting $k=10$ appears sufficient in many situations of interest -- see Section \ref{sec:experiments}.

\subsection{Randomized Algorithm for Selecting the `Best' Preconditioner}
\label{sub:thealgorithm}

Once we have a practical way to compute preconditioner stability, a trivial algorithm for picking the preconditioner among $n$ candidates $\M_1,\M_2,\ldots,\M_n$ becomes natural. Namely, we can compute estimates $S_i \approx \|\I-\M_i^{-1}\A\|_\f$ for $i=1,2,\ldots,n$ and then just return the preconditioner $\M_i$ for which $S_i$ is minimized. This is presented as Algorithm \ref{alg:pick}. As we mentioned in the previous section, theoretical advice on how to pick $k$ will be given in Section \ref{sub:upper_bound}. An improvement to this algorithm in the case there is a clear winner, relying on those analytical bounds, is included in Section \ref{sub:winner}.

\begin{algorithm2e}[t]
    \KwData{A matrix $\A\in\F^{d\times d}$, $n$ candidate preconditioners $\M_1$, $\M_2$,$\ldots$,$\M_n\in\F^{d\times d}$, and an accuracy parameter $k\in\{1,2,\ldots\}$.}
    \KwResult{A preconditioner $\M_i$ which approximately minimizes the stability criterion $\|\I - \M_j^{-1}\A\|_\f$ over $j=1,\ldots,n$.}

    Compute a stability estimate $S_j = \Stab(\A,\M_j,k)$ for each $j=1,2,\ldots,n$.

    Return an arbitrary $\M_i$ with $\displaystyle S_i = \min_{1\leq j \leq n} S_j$.

    \caption{Returns an approximately optimal preconditioner in the Tyrtyshkinov sense among $n$ candidates with strictly fewer floating point operations than running $k$ iterations of preconditioned conjugate gradients \cite[Alg. 11.5.1]{golub2012matrix} with each of the $n$ preconditioners.}\label{alg:pick}
\end{algorithm2e}

We note that the sketching matrix $\Q$ can be re-used when computing the stability estimates $S_j$ in Algorithm \ref{alg:stab}. This is done in all our computational experiments in Section \ref{sec:experiments}, and reduces the number of normal variates one needs to simulate from $ndk$ to $dk$. Reuse does not affect our theoretical upper bound presented as Theorem \ref{thm:pick}.

\subsubsection{Parallelization}
\label{sub:parallel}

A convenient aspect of sketching based algorithms like Algorithm \ref{alg:pick} is that they can be parallelized extremely easily. For instance, suppose we are trying to pick the minimal stability preconditioner among $n$ candidates $\M_1,\M_2,\ldots,\M_n$, and have $n$ processors $P_j$, $j=1,2,\ldots,n$, which have access to $\A$ and $\M_j$, respectively. Then we can compute each stability estimate $S_j$ in parallel at processor $P_j$. Ignoring communication costs (which are a genuine concern in practice,) this would bring the runtime of the algorithm down to computing $k$ steps of preconditioned conjugate gradients with $\A$ and the most computation-intensive (in terms of matrix-vector multiply access to $\M^{-1}$) preconditioner $\M_j$.

Taken to the extreme, one could similarly parallelize Algorithm \ref{alg:pick} over $nk$ processors $P_{ij}$, $i=1,2,\ldots,n$ and $j=1,2,\ldots,k$, assuming each $P_{ij}$ had access to $\A$ and the candidate preconditioner $\M_j$. Each processor $P_{ij}$ would need to compute and return $s_{ij}=\|(\I - \M_j^{-1}\A)\q_i\|_2^2$ where $\q_i$ is an independently sampled standard normal vector. Then in parallel for all $i=1,2,\ldots,n$ processor $P_{i1}$ could compute $S_i^2 = \tfrac{1}{k}\sum_{j=1}^k s_{ij}$, at which point we could use processor $P_{11}$ to compute $i$ such that $S_i^2$ is minimal and return the corresponding $\M_i$. Ignoring communication costs again, this algorithm take fewer floating point operations than running \emph{one} iteration of preconditioned conjugate gradients with $\A$ and the most computation-intensive preconditioner $\M_j$, plus $k$ flops that were used to turn the $s_{ij}$ into the estimate $S_i$.

\subsection{Approximation Guarantees and Runtime Bounds}
\label{sub:upper_bound}

This section details runtime bounds and approximation guarantees for Algorithms \ref{alg:stab} and \ref{alg:pick}, as well as using those bounds to derive a theoretical improvement to Algorithm \ref{alg:pick}.

To start, we will give the following theorem, which says that to estimate preconditioner stability up to a $1\pm\epsilon$ multiplicative factor with failure probability at most $\delta$, one may take $k = \mathcal{O}(\tfrac{1}{\epsilon^2}\log\tfrac{1}{\delta})$ in Algorithm \ref{alg:stab}. This is (in contrast to the deterministic case) completely independent of the underlying dimension. Theorem \ref{thm:stab} result sharpens an analogous bound for trace estimators given in \cite[Thm. 3]{roosta2015improved}, largely following the proof structure of Theorem 1 from that work. We bring the leading constant from a $8$ to $4$ as $\epsilon\to 0$ and allow for complex matrices. This Theorem is included as a result in its own right primarily since we can even show that the leading constant is tight; see Section \ref{sec:lower_bound}.

\begin{theorem}\label{thm:stab}
    Let $\M$ and $\A$ be arbitrary matrices in $\F^{d\times d}$ where $\M$ is invertible. If $\epsilon$ and $\delta$ are positive and less than one, taking $k \geq \tfrac{12}{\epsilon^2(3-2\epsilon)}\log\tfrac{2}{\delta}$ ensures that in exact arithmetic the estimate $\Stab(\A,\M,k)\in\mathbb{R}$ of Algorithm \ref{alg:stab} satisfies
    \[
        \sqrt{1-\epsilon}\,\|\I-\M^{-1}\A\|_\f \leq \Stab(\A,\M,k) \leq \sqrt{1+\epsilon}\,\|\I-\M^{-1}\A\|_\f.
    \]
    with probability at least $1-\delta$. In particular, if $\epsilon \leq 1/2$, then the simpler condition $k \geq \tfrac{6}{\epsilon^2}\log\tfrac{1}{\delta}$ ensures this same approximation guarantee.
\end{theorem}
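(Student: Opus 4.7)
\emph{Proof plan.} Write $\B = \I - \M^{-1}\A$ and $F = \|\B\|_\f^2$, and let $\lambda_1,\ldots,\lambda_d \geq 0$ be the eigenvalues of $\B^*\B$, so that $F = \sum_j \lambda_j$, $\sum_j \lambda_j^2 \leq F^2$, and $\lambda_{\max} \leq F$. The plan is a two-sided Chernoff bound on $\Stab(\A,\M,k)^2 = \sum_{i=1}^k \|\B\q_i\|_2^2$, where $\q_i \sim \mathcal{N}_\F(\0, \tfrac{1}{k}\I)$ are i.i.d. First I would exploit rotational invariance of the Gaussian together with the eigendecomposition of $\B^*\B$ to rewrite each $\|\B\q_i\|_2^2$ in distribution as $\tfrac{1}{k}\sum_j \lambda_j \xi_{ij}$, where the $\xi_{ij}$ are i.i.d.\ $\chi_1^2$ variates in the real case and i.i.d.\ $\mathrm{Exp}(1)$ variates in the complex case. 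Since the real-case moment generating function $(1-2\mu\lambda_j)^{-1/2}$ pointwise dominates its complex-case analog $(1-\mu\lambda_j)^{-1}$, it is enough to establish the tail bound in the real case, and the complex case then inherits at least the same quality.

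Next I would apply Markov's inequality to $e^{\mu k\Stab(\A,\M,k)^2}$ for the upper tail and to $e^{-\mu k\Stab(\A,\M,k)^2}$ for the lower tail, using independence to factor the MGF as $\prod_j (1-2\mu\lambda_j)^{-k/2}$. The essential analytic step is then the elementary inequality $-\log(1-x) \leq x + \tfrac{x^2/2}{1-x}$ valid on $[0,1)$; summing this over $j$ and collapsing the spectrum to $F$ via $\sum_j\lambda_j^2 \leq F^2$ and $\lambda_{\max} \leq F$ reduces the upper-tail Chernoff exponent to $k\bigl(-\mu\epsilon F + \tfrac{\mu^2 F^2}{1-2\mu F}\bigr)$. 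A parallel derivation using $\log(1+x) \geq x - \tfrac{x^2}{2}$ for $x \geq 0$ yields a strictly smaller lower-tail exponent of the form $k\bigl(-\mu\epsilon F + \mu^2 F^2\bigr)$, so the upper tail is the binding constraint.

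The remaining task is to select the Chernoff parameter $\mu = \mu(\epsilon,F)$ so that the upper-tail exponent is at most $-k\epsilon^2(3-2\epsilon)/12$. This choice is the principal technical obstacle: the globally optimal $\mu$ gives rate $(\sqrt{1+2\epsilon}-1)^2/4$, which does not simplify to the desired clean closed form in $\epsilon$, so a slightly suboptimal but explicit $\mu$ must be tuned to coax out the stated expression. A union bound then produces $\P(|\Stab(\A,\M,k)^2 - F| > \epsilon F) \leq 2\exp\bigl(-k\epsilon^2(3-2\epsilon)/12\bigr)$; setting this at most $\delta$ and solving for $k$ gives the main sample-complexity bound. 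Taking square roots converts the multiplicative control on $\Stab(\A,\M,k)^2$ into the stated $\sqrt{1\pm\epsilon}$ multiplicative control on $\Stab(\A,\M,k)$, and the simplified condition for $\epsilon \leq 1/2$ follows from $3-2\epsilon \geq 2$ on that range together with a careful accounting of the logarithmic factor.
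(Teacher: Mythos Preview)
Your overall architecture matches the paper's: rotational invariance reduces to a weighted sum of $\chi^2_1$ (or exponential) variables, then a two-sided Chernoff bound with a union bound; your handling of $\F=\C$ via MGF domination is also exactly what the paper does. The substantive divergence is in how you collapse the spectrum, and there the proposal has a genuine gap that prevents you from reaching the stated constant.

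Your plan applies $-\log(1-x)\leq x+\tfrac{x^2/2}{1-x}$ and then uses $\sum_j\lambda_j^2\leq F^2$ and $\lambda_{\max}\leq F$, yielding the upper-tail exponent $k\bigl(-\nu\epsilon+\tfrac{\nu^2}{1-2\nu}\bigr)$ with $\nu=\mu F$. You correctly compute its optimum as $-\tfrac{1}{4}(\sqrt{1+2\epsilon}-1)^2$. But this is \emph{already the best your inequality can deliver}, and for small $\epsilon$ it is strictly weaker than the paper's rate: expanding, $\tfrac{1}{4}(\sqrt{1+2\epsilon}-1)^2=\tfrac{\epsilon^2}{4}-\tfrac{\epsilon^3}{4}+O(\epsilon^4)$ whereas $\tfrac{\epsilon^2(3-2\epsilon)}{12}=\tfrac{\epsilon^2}{4}-\tfrac{\epsilon^3}{6}$, so the difference is $\tfrac{\epsilon^3}{12}+O(\epsilon^4)>0$. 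Consequently no choice of $\mu$---suboptimal or otherwise---will ``coax out'' the constant $\tfrac{12}{\epsilon^2(3-2\epsilon)}$; the Taylor-type log inequality has thrown away exactly the third-order information that distinguishes the two constants.

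The paper avoids this loss by collapsing the spectrum \emph{before} any Taylor approximation. Writing $\sum_j\tfrac{\lambda_j}{F}=1$, it applies Jensen's inequality to the convex combination inside the exponential to obtain directly
\[
\E\exp\Bigl(\sum_j\tfrac{\lambda_j}{F}\sum_i t|\q_{ij}|^2\Bigr)\le \prod_i\E e^{t|\q_{i1}|^2}=(1-2t)^{-k/2},
\]
which is the exact rank-one MGF (equivalently: $x\mapsto-\log(1-x)$ is convex, so $\sum_j-\log(1-2\mu\lambda_j)$ is maximized over the simplex $\sum_j\lambda_j=F$ at a vertex). With the explicit choices $t=\tfrac{\epsilon}{2(1+\epsilon)}$ (upper tail) and $t=\tfrac{\epsilon}{2(1-\epsilon)}$ (lower tail), the bound becomes $((1\pm\epsilon)e^{\mp\epsilon})^{k/2}$, and the scalar inequality $(1+\epsilon)e^{-\epsilon}<e^{\epsilon^3/3-\epsilon^2/2}$ produces precisely $e^{-k\epsilon^2(3-2\epsilon)/12}$. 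If you swap your log-inequality step for this Jensen step, the rest of your outline goes through unchanged.
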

\begin{proof}
    Unitarily diagonalize $(\I - \M^{-1}\A)^*(\I - \M^{-1}\A)$ as $\U^* \Lambda \U$ for some unitary $\U$ and non-negative diagonal matrix $\Lambda=\operatorname{diag}(\lambda_1,\ldots,\lambda_d)$. By the rotation invariance of the Gaussian and the fact that $\U$ is orthogonal when $\F=\R$, $\|(\I - \M^{-1}\A)\q_i\|_2$ is equal in distribution to $\|\Lambda^{1/2}\q_i\|_2$ in both the real and complex cases. This implies $\Stab(\A,\M,k)$ is equal in distribution to $\tfrac{1}{k}\sum_{i=1}^k \q_i^*\Lambda\q_i = \tfrac{1}{k}\sum_{j=1}^d \lambda_j \sum_{i=1}^k|\q_{ij}|^2$ and we can focus on the latter, simpler quantity. By Markov's inequality, for any  $t > 0$,
    \begin{align}
        \P\bigl(\Stab(\A,\M,k)^2& > (1+\epsilon)\|\I - \M^{-1}\A\|_\f^2\bigr)\label{eq1}\\ &= \P\biggl(\sum_{j=1}^d \lambda_j \sum_{i=1}^k|\q_{ij}|^2 \geq k(1+\epsilon)\sum_{j=1}^d \lambda_j\biggr)\label{eq2}\\
        &\leq e^{-(1+\epsilon)kt} \E \exp\biggl(\sum_{j=1}^d \frac{\lambda_j}{\sum_{j=1}^d \lambda_j}\sum_{i=1}^k t |\q_{ij}|^2\biggr).\label{eq:bound}
    \end{align}
    Jensen's inequality reduces
    \begin{equation}
        \E \exp\biggl(\sum_{j=1}^d \frac{\lambda_j}{\sum_{j=1}^d \lambda_j}\sum_{i=1}^k t |\q_{ij}|^2\biggr) \leq \prod_{i=1}^k \E e^{t |\q_{i1}|^2} \leq (1-2t)^{-\tfrac{k}{2}}
    \end{equation}
    so long as $t < 1/2$. Note that the last inequality covers both real and complex $\F$, and results from the $\F=\C$ moment generating function $\E e^{t |\q_{i1}|^2} = (1-t)^{-1} \leq (1-2t)^{-1/2}$ being bounded above uniformly by the real case for this range of $t$. Taking $t = \tfrac{1}{2}\tfrac{\epsilon}{1+\epsilon} \in (0,1/2)$ in Equation \ref{eq:bound} gives
    \begin{equation}
        \P\bigl(\Stab(\A,\M,k)^2 > (1+\epsilon)\|\I - \M^{-1}\A\|_\f^2\bigr) \leq \bigl((1+\epsilon)e^{-\epsilon}\bigr)^{m/2} < e^{-\tfrac{m}{2}\bigl(\tfrac{\epsilon^2}{2} - \tfrac{\epsilon^3}{3}\bigr)}
    \end{equation}
    via the scalar inequality $(1+\epsilon)e^{-\epsilon} < e^{\tfrac{\epsilon^3}{3} - \tfrac{\epsilon^2}{2}}$ for all $\epsilon > 0$. The same argument for the lower tail, instead taking $t = \tfrac{1}{2}\tfrac{\epsilon}{1-\epsilon}$, gives
    \begin{equation}
        \P\bigl(\Stab(\A,\M,k)^2 < (1-\epsilon)\|\I - \M^{-1}\A\|_\f^2\bigr) < e^{-\tfrac{m}{2}\bigl(\tfrac{\epsilon^2}{2} - \tfrac{\epsilon^3}{3}\bigr)}
    \end{equation}
    as well. A union bound provides the desired result.
\end{proof}

Using Theorem \ref{thm:stab} we are able to prove an approximation guarantee for Algorithm \ref{alg:pick} via a union bound. In particular, to achieve an $\epsilon$-multiplicative approximation to the best of $n$ candidate preconditioners with probability at least $1-\delta$ we can take $k = \mathcal{O}(\tfrac{1}{\epsilon^2}\log\tfrac{n}{\delta})$, again independent of the underlying dimension. This dependence on $n$ is quite weak, especially since in realistic applications we would only expect to have at most, say, fifty candidate preconditioners.

\begin{theorem}\label{thm:pick}
    Let $\A\in\F^{d\times d}$ be an arbitrary matrix, and $\M_1,\M_2,\ldots,\M_n\in\F^{d\times d}$ be invertible candidate preconditioners for $\A$. If $\epsilon$ and $\delta$ are positive and less than one, taking $k\geq \tfrac{12}{\epsilon^2(3-2\epsilon)}\log\tfrac{2n}{\delta}$ ensures that the preconditioner $\M_i$ returned by Algorithm \ref{alg:pick} satisfies
    \[
        \|\I - \M_i^{-1}\A\|_\f \leq \sqrt{\frac{1+\epsilon}{1-\epsilon}}\,\min_{1\leq j\leq n} \|\I - \M_j^{-1}\A\|_\f
    \]
    with probability at least $1-\delta$. In particular, if $\epsilon < 1/2$ the simpler condition $k \geq \tfrac{11}{\epsilon^2}\log\tfrac{2n}{\delta}$ ensures
    \[
        \|\I - \M_i^{-1}\A\|_\f \leq (1+\epsilon)\,\min_{1\leq j\leq n} \|\I - \M_j^{-1}\A\|_\f
    \]
    with probability at least $1-\delta$.
\end{theorem}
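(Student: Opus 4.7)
The plan is to reduce the theorem to a straightforward application of Theorem \ref{thm:stab} together with a union bound. First, I would apply Theorem \ref{thm:stab} to each of the $n$ candidate preconditioners $\M_j$ separately, but with failure probability $\delta/n$ instead of $\delta$. The required sample size becomes $k \geq \tfrac{12}{\epsilon^2(3-2\epsilon)}\log\tfrac{2n}{\delta}$, which is exactly the hypothesis. Let $E_j$ denote the event that $\sqrt{1-\epsilon}\,\|\I-\M_j^{-1}\A\|_\f \leq S_j \leq \sqrt{1+\epsilon}\,\|\I-\M_j^{-1}\A\|_\f$. Then $\P(E_j^c) \leq \delta/n$, so a union bound yields $\P\bigl(\bigcap_{j=1}^n E_j\bigr) \geq 1-\delta$.

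Conditioning on the event that every $S_j$ is an accurate estimate, the rest is a short chain of inequalities. Let $i$ be the index returned by Algorithm \ref{alg:pick} and $i^\star$ be any true minimizer of the stability. By the lower bound from $E_i$, the optimality $S_i \leq S_{i^\star}$ (by definition of $i$), and the upper bound from $E_{i^\star}$,
\[
    \sqrt{1-\epsilon}\,\|\I - \M_i^{-1}\A\|_\f \leq S_i \leq S_{i^\star} \leq \sqrt{1+\epsilon}\,\|\I - \M_{i^\star}^{-1}\A\|_\f.
\]
Dividing by $\sqrt{1-\epsilon}$ gives the first claimed inequality on the event of probability at least $1-\delta$.

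For the second, cleaner statement, I would reparameterize. To conclude a $(1+\epsilon)$-approximation, it suffices to run the first part with an internal accuracy $\epsilon'$ chosen so that $\sqrt{\tfrac{1+\epsilon'}{1-\epsilon'}} \leq 1+\epsilon$. Solving this quadratic-type inequality, any $\epsilon' \leq \tfrac{2\epsilon+\epsilon^2}{2+2\epsilon+\epsilon^2}$ works. The remaining task is purely numerical: substitute this $\epsilon'$ into the bound $\tfrac{12}{(\epsilon')^2(3-2\epsilon')}\log\tfrac{2n}{\delta}$ and verify that for all $\epsilon \in (0,1/2)$ it is dominated by the simpler expression $\tfrac{11}{\epsilon^2}\log\tfrac{2n}{\delta}$. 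A quick check at the extreme $\epsilon = 1/2$ gives $\epsilon' = 5/13$ and a constant of about $36.4$, well below $44 = 11/\epsilon^2$; and as $\epsilon\to 0$ the bound approaches $4/\epsilon^2$, also below $11/\epsilon^2$. Monotonicity in $\epsilon$ of the relevant ratio then finishes the argument.

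There is no serious obstacle here beyond the bookkeeping in the last paragraph; the crux of the result, namely the union bound combined with the sandwich argument, is essentially forced by Theorem \ref{thm:stab}. The only subtlety is in making the internal accuracy substitution sufficiently clean so that the final constant $11$ (and not something uglier) is justified across the full range $\epsilon \in (0,1/2)$.
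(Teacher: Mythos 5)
Your proposal is correct and follows essentially the same route as the paper: apply Theorem \ref{thm:stab} with failure probability $\delta/n$ to each candidate, union bound (which needs no independence), and then the sandwich $\sqrt{1-\epsilon}\,\|\I-\M_i^{-1}\A\|_\f \leq S_i \leq S_{i^\star} \leq \sqrt{1+\epsilon}\,\|\I-\M_{i^\star}^{-1}\A\|_\f$. The only difference is in the final bookkeeping for the constant $11$: the paper invokes the scalar inequality $\sqrt{\tfrac{1+\epsilon}{1-\epsilon}} \leq 1+\tfrac{4}{3}\epsilon$ for $0\leq\epsilon<2/5$ (i.e.\ runs the first bound at internal accuracy $\tfrac{3}{4}\epsilon$) rather than your exact reparameterization $\epsilon' = \tfrac{2\epsilon+\epsilon^2}{2+2\epsilon+\epsilon^2}$ with a monotonicity check, but both verifications are valid and yield the stated constant.
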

\begin{proof}
    Start by fixing any $j\in\{1,2,\ldots,n\}$. If we take $k\geq \tfrac{12}{\epsilon^2(3-2\epsilon)}\log\tfrac{2n}{\delta}$, Theorem \ref{thm:stab} ensures that
    \begin{equation}\label{eq:approx}
        \sqrt{1-\epsilon}\,\|\I-\M_j^{-1}\A\|_\f \leq \Stab(\A,\M_j,k) \leq \sqrt{1+\epsilon}\,\|\I-\M_j^{-1}\A\|_\f,
    \end{equation}
    except with probability at most $\tfrac{\delta}{n}$. In particular, if we unfix $j$ the probability at least one of the $\Stab(\A,\M_j,k)$ does not satisfy Equation \ref{eq:approx} is at most
    \(
        \sum_{j=1}^n \tfrac{\delta}{n} = \delta
    \)
    by a union bound. (Note that we did not need independence of the estimates $\Stab(\A,\M_j,k)$ here; this is why reusing the sketching matrix $\Q$ is valid.) Thus with probability at least $1-\delta$ all estimates $\Stab(\A,\M_j,k)$ satisfy Equation \ref{eq:approx} simultaneously.
    
    Write $\M_i$ for the candidate preconditioner returned by Algorithm \ref{alg:pick}, and write $\M_\star$ for a candidate preconditioner which satisfies
    \begin{equation}\label{eq:opt}
        \|\I - \M_\star^{-1}\A\|_\f = \min_{1\leq j \leq n}\|\I - \M_j^{-1}\A\|_\f.
    \end{equation}
    Then since the \emph{estimate} of the stability of $\M_i$ was at most that of $\M_\star$ by minimality, the simultaneous bounds of Equation \ref{eq:approx} give
    \[
        \sqrt{1-\epsilon}\|\I - \M_i^{-1}\A\|_\f \leq \Stab(\A,\M_i,k) \leq \Stab(\A,\M_\star,k) \leq \sqrt{1+\epsilon}\|\I - \M_\star^{-1}\A\|_\f
    \]
    except with probability at most $\delta$. Rearranging the inequality gives the desired result after substituting Equation \ref{eq:opt}.

    The final simplified bound results from the scalar inequality $\sqrt{\tfrac{1+\epsilon}{1-\epsilon}} \leq 1+\tfrac{4}{3}\epsilon$ when $0 \leq \epsilon < 2/5$ and simple algebraic manipulation.
\end{proof}

\subsubsection{The Constant in Theorem \ref{thm:stab} is Tight}
\label{sec:lower_bound}

Most of the theory presented in this paper relies on Theorem \ref{thm:stab} to create more sophisticated bounds. Since Algorithm \ref{alg:stab} is at its core a repurposing of a trace estimator using only matrix vector products, the work \cite{wimmer2014optimal} applies and ensures that no randomized, adaptive algorithm for estimating the stability $\|\I - \M^{-1}\A\|_\f^2 = (\I - \M^{-1}\A)^*(\I - \M^{-1}\A)$ could possibly use asymptotically fewer matrix-vector multiplies so long as the algorithm only has access to $\|(\I - \M^{-1}\A)\q\|_2$ for query vectors $\q$. In this sense, Algorithm \ref{alg:stab} is optimal.

The theoretically-inclined practitioner, however, also cares about knowing the optimality of our \emph{analysis} in Theorem \ref{thm:stab}. The following Theorem says that our analysis in Theorem \ref{thm:stab} is asymptotically tight even up to the leading effective constant $12 / (3-2\epsilon)$ which tends to $4$ for small $\epsilon$. In the proof, $W(x) = \log x - \log\log x + o(1) = \Theta(\log x)$ as $x\to\infty$ is the Lambert-$W$ function \cite{hoorfar2007approximation}.

\begin{theorem}\label{thm:lower}
    Fix some $0 < \delta < 1/10$. For any underlying dimension $d$, there exists a positive semi-definite matrix $\A\in\R^{d\times d}$, positive definite matrix $\M\in\R^{d\times d}$, and some $\epsilon_0>0$ so that for any $0 < \epsilon < \epsilon_0$, taking  $k = \lfloor\tfrac{4}{\epsilon^2}\log\tfrac{1}{\sqrt{8\pi}\delta} - \tfrac{2}{\epsilon^2}\log\log\tfrac{1}{\sqrt{8\pi}\delta}\rfloor$ guarantees the stability estimate $\Stab(\A,\M,k)$ returned by Algorithm \ref{alg:stab} fails to satisfy the equation
    \[
        \sqrt{1-\epsilon}\|\I - \M^{-1}\A\|_\f \leq \Stab(\A,\M,k) \leq \sqrt{1+\epsilon}\|\I - \M^{-1}\A\|_\f
    \]
    with probability at least $\delta$.
\end{theorem}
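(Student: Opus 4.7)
The plan is to exhibit one explicit pair $(\A,\M)$ for which $\Stab(\A,\M,k)^2$ has an exact scaled chi-squared distribution, and then lower-bound the chi-squared upper tail using Berry-Esseen together with the sharp Gaussian tail inequality. Take $\A = \I - \e_1\e_1^*$ (positive semi-definite with spectrum $\{0,1,\ldots,1\}$) and $\M = \I$. Then $\I - \M^{-1}\A = \e_1\e_1^*$, so $\|\I-\M^{-1}\A\|_\f = 1$ and only the eigenvalue $\lambda_1 = 1$ survives in the diagonalization used in the proof of Theorem~\ref{thm:stab}. Reading off that calculation, $\Stab(\A,\M,k)^2 = \frac{1}{k}\sum_{i=1}^k Z_i^2$ with $Z_i$ i.i.d.\ $\mathcal{N}(0,1)$, i.e.\ $k\cdot\Stab(\A,\M,k)^2 \sim \chi^2_k$ exactly.

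The failure event in the theorem contains the one-sided event $\{\Stab(\A,\M,k)^2 > 1+\epsilon\} = \{\chi^2_k > k(1+\epsilon)\}$, so it suffices to lower bound $\P(\chi^2_k > k(1+\epsilon))$ by $\delta$. Normalizing $Y := (\chi^2_k - k)/\sqrt{2k}$, a sum of $k$ i.i.d.\ mean-zero unit-variance variables with bounded third absolute moment, Berry-Esseen yields
\[
    \P(\chi^2_k > k(1+\epsilon)) \;\geq\; 1 - \Phi\bigl(\epsilon\sqrt{k/2}\bigr) - \frac{C}{\sqrt{k}}
\]
for an absolute constant $C$, where $\Phi$ is the standard normal CDF. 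Combined with the classical tail lower bound $1-\Phi(x) \geq \frac{x}{\sqrt{2\pi}(1+x^2)}e^{-x^2/2}$, this is the estimate to push through.

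Writing $c := \log\tfrac{1}{\sqrt{8\pi}\delta}$, the proposed $k = \lfloor 4c/\epsilon^2 - 2\log c/\epsilon^2\rfloor$ is precisely chosen so that $x := \epsilon\sqrt{k/2}$ obeys $x^2/2 = c - \tfrac{1}{2}\log c + O(\epsilon^2)$ and $\sqrt{\pi\epsilon^2 k} = 2\sqrt{\pi c}\,(1+o(1))$ as $\epsilon\to 0$ with $\delta$ fixed. Substituting,
\[
    1-\Phi(x) \;\geq\; \frac{1}{\sqrt{\pi\epsilon^2 k}}\,e^{-\epsilon^2 k/4}\bigl(1+o(1)\bigr) \;=\; \frac{\sqrt{c}\,e^{-c}}{2\sqrt{\pi c}}\bigl(1+o(1)\bigr) \;=\; \sqrt{2}\,\delta\,\bigl(1+o(1)\bigr).
\]
Since $\delta < 1/10$ the factor $\sqrt{2}$ leaves ample slack to absorb both the $(1+o(1))$ correction and the Berry-Esseen error $C/\sqrt{k} = O(\epsilon)$, yielding $\P(\chi^2_k > k(1+\epsilon)) \geq \delta$ whenever $0 < \epsilon < \epsilon_0(\delta)$. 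The Lambert-$W$ identity $W(x) = \log x - \log\log x + o(1)$ is exactly what makes the chosen $k$ an asymptotic inverse of $k \mapsto e^{-\epsilon^2 k/4}/\sqrt{\pi\epsilon^2 k}$, which is why inverting the tail estimate produces this precise form of $k$.

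The main obstacle is the interplay between the polynomial prefactor in the Gaussian tail bound and the sub-leading $\log\log$ term in $k$: absorbing $\sqrt{\pi\epsilon^2 k}$ crudely loses exactly a $\log\log(1/\delta)$ worth of samples, which is what the theorem statement advertises as necessary. Hence the Lambert-$W$ bookkeeping is essential rather than a coarse $k \gtrsim (4/\epsilon^2)\log(1/\delta)$ calculation. The only other technical point is that Berry-Esseen is applied in a regime where $x = \epsilon\sqrt{k/2}$ is large, but this is fine because the leading tail probability is $\Theta(\delta)$ while the Berry-Esseen error is $O(1/\sqrt{k}) = O(\epsilon)$, which becomes negligible once $\epsilon_0$ is taken small enough in terms of $\delta$.
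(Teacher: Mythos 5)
Your construction and overall scaffolding coincide with the paper's: the same pair $\A=\I-\e_1\e_1^*$, $\M=\I$, the same reduction of $k\cdot\Stab(\A,\M,k)^2$ to a $\chi^2_k$ variable, the same Gaussian lower tail bound, and the same Lambert-$W$ bookkeeping for $k$ (Berry--Esseen in place of the plain CLT is a harmless upgrade). The gap is in the tail accounting. You keep only the one-sided event $\{\Stab(\A,\M,k)^2>1+\epsilon\}$, and your ``$1+o(1)$'' simplifications --- replacing $x/(1+x^2)$ by $1/x$ and $\sqrt{\pi\epsilon^2k}$ by $2\sqrt{\pi c}$ --- are asymptotics in $\delta\to0$, not in the theorem's actual limit. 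With $\delta$ fixed and $\epsilon\to0$, the normalized threshold $x=\epsilon\sqrt{k/2}$ converges to the \emph{constant} $x_0=\sqrt{2c-\log c}$, so each of those steps loses a constant factor depending on $\delta$, and your advertised margin $\sqrt{2}\,\delta$ only appears for small $\delta$. Near the top of the allowed range the numbers simply do not close: at $\delta=1/10$ one has $c\approx0.69$, $x_0\approx1.32$, and the limiting one-sided probability is $1-\Phi(x_0)\approx0.093<\delta$ --- this is with the \emph{exact} Gaussian tail, before any lower-bounding --- so no choice of $\epsilon_0$ can make the one-sided event alone have probability at least $\delta$ for all $0<\epsilon<\epsilon_0$. (With your specific chain of inequalities the failure region extends further down, to roughly $\delta\gtrsim0.07$.) Containment of the one-sided event in the failure event is correct; it is just too small an event for the $\delta$'s the theorem permits.

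The paper's proof sidesteps exactly this by using the full two-sided failure event $|\Stab(\A,\M,k)^2-1|>\epsilon$ together with the two-sided bound $\P(|Z|\geq t)>\tfrac{1}{\sqrt{2\pi}\,t}e^{-t^2/2}$, valid for $t\geq1$ --- which is precisely where the hypothesis $\delta<1/10$ enters, since it guarantees $\sqrt{W(1/(8\pi\delta^2))}\geq1$. With $t=\sqrt{W(1/(8\pi\delta^2))}$ this gives a limiting two-sided failure probability strictly greater than $2\delta$, and that factor of $2$ is the slack that absorbs the CLT approximation error, so mere convergence in distribution (no quantitative Berry--Esseen) suffices to extract $\epsilon_0$. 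The repair to your argument is direct: keep both tails, bound $\P\bigl(|\Stab(\A,\M,k)^2-1|>\epsilon\bigr)\geq\P(|Z|\geq x)-O(1/\sqrt{k})$ via two-sided Berry--Esseen, invoke the two-sided Gaussian bound to get $\P(|Z|\geq x)>2\delta$ for $\delta\leq1/10$, and note $O(1/\sqrt{k})=O(\epsilon)$ is eventually below the extra $\delta$ of headroom.
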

\begin{proof}
    If $Z \sim \mathcal{N}(0,1)$ is a standard normal random variable then
    \begin{equation}\label{eq:gaussian}
        \P(|Z| \geq t) = 2\P(Z > t) > \sqrt{\frac{2}{\pi}}\frac{t}{t^2+1}e^{-t^2/2} \geq \frac{1}{\sqrt{2\pi}}\frac{1}{t}e^{-t^2/2}
    \end{equation}
    by \cite{gordon1941values} for all $t \geq 1$. Setting the right hand side of Inequality \ref{eq:gaussian} to $2\delta$ and solving gives
    \begin{equation}\label{eq:gaussianlower}
        \P\bigl(|Z| \geq \sqrt{W(8^{-1}\pi^{-1}\delta^{-2})}\bigr) > 2\delta
    \end{equation}
    whenever $\sqrt{W(8^{-1}\pi^{-1}\delta^{-2})} \geq 1$, which is satisfied when $0 < \delta \leq 1/10$.
    
    Now let $\A = \I - \e_1\e_1^*$ and $\M=\I$, where $\e_1$ is the first standard basis vector. We can observe that
    \begin{equation}
        \|(\I - \M^{-1}\A)\q\|_2^2 = \|\e_1(\e_1^*\q)\|_2^2 = \q_1^2 \sim \chi^2
    \end{equation}
    if $\q$ is a standard Gaussian vector. In particular, the standard deviation of $\|(\I - \M^{-1}\A)\q\|_2^2$ is $\sigma=\sqrt{2}$. Thus since $\Stab(\A,\M,k)^2$ is a sample average of independent copies of these random variables, fixing $k = \lfloor \tfrac{2}{\epsilon^2}W(\tfrac{1}{8\pi\delta^2})\rfloor$ ensures
    \begin{align}
        \P(|\Stab(\A,&\M,k)^2 - 1| > \epsilon)\\&= \P(\tfrac{\sqrt{k}}{\sigma}|\Stab(\A,\M,k)^2-1| \geq \tfrac{\sqrt{k}\epsilon}{\sigma})\\
        &\geq \P\bigl(\tfrac{\sqrt{k}}{\sigma}|\Stab(\A,\M,k)^2-1| \geq \sqrt{W(8^{-1}\pi^{-1}\delta^{-2})}\bigr)\\
        &\to \P\bigl(|Z| \geq \sqrt{W(8^{-1}\pi^{-1}\delta^{-2})}\bigr) > 2\delta
    \end{align}
    by the Central Limit Theorem \cite[Thm. 1.3.2]{vershynin2018high} and Equation \ref{eq:gaussianlower} as $k\to\infty$. This implies the existence of an $\epsilon_0$ so that $0 < \epsilon < \epsilon_0$ ensures the relation
    \begin{equation}
        \sqrt{1-\epsilon}\|\I - \M^{-1}\A\|_\f \leq \Stab(\A,\M,k) \leq \sqrt{1+\epsilon}\|\I - \M^{-1}\A\|_\f
    \end{equation}
    fails with probability \emph{at least} $\delta$ under our relation defining $k$. The simpler condition on $k$ given in the statement of this result follows from the bound $W(x) \geq \log(x) - \log\log(x)$ for all $x\geq e$ from \cite[Thm. 2]{hoorfar2007approximation}.
\end{proof}

We point out that the above proof gives another confirmation of the conjecture in \cite{avron2011randomized} regarding the true asymptotics of the Gaussian trace estimator. The work in \cite{wimmer2014optimal} confirmed this conjecture to be true, but did so as a corollary of a general result regarding lower bounds for trace estimation algorithms. Our result above is much more direct.

\subsubsection{An Improvement in the Presence of a Clear Winner}
\label{sub:winner}

Algorithm \ref{alg:pick} is simple to implement and works well for selecting preconditioners of minimal stability, as we shall see in Section \ref{sec:experiments}. Nevertheless, if we are selecting between preconditioners where some are clearly worse than the optimal preconditioner in terms of stability, our method seems excessive. Intuitively, we should be able to tell that terrible preconditioners will not be optimal with very rudimentary information. Algorithm \ref{alg:improvement} presents such a revision to Algorithm \ref{alg:pick}, iteratively refining the stability estimates we have and filtering out any preconditioners as soon as we can be confident they will not be optimal. Note that the algorithm crucially relies on the bounds from Section \ref{sub:upper_bound}.

\begin{algorithm2e}[t]
    \KwData{A matrix $\A\in\F^{d\times d}$, $n$ candidate preconditioners $\M_1$, $\M_2$,$\ldots$,$\M_n\in\F^{d\times d}$, an accuracy parameter $0 < \epsilon < \tfrac{1}{2}$ and an acceptable failure probability $0 < \delta < 1$.}
    \KwResult{A preconditioner $\M_i$ for which the stability criterion $\|\I - \M_i^{-1}\A\|_\f$ is an $\epsilon$-multiplicative approximation to the minimum possible among the candidate preconditioners, except with probability at most $\delta$.}

    $\epsilon_\cur \leftarrow 1$
    
    $P \leftarrow \{1,2,\ldots,n\}$

    $T \leftarrow \lceil \log_2\tfrac{1}{\epsilon}\rceil$

    \For{$t = 1,2,\ldots, T$}{
        $\epsilon_\cur \leftarrow \epsilon_\cur / 2$

        $k \leftarrow \tfrac{6}{\epsilon_\cur^2}\log\tfrac{2T|P|}{\delta}$

        $S_i \leftarrow \Stab(\A,\M_i,k)$ for all $i\in P$

        $i^\star = \operatorname*{arg\,min}_{i\in P} S_i$

        $P \leftarrow \bigl\{i \in P : S_i \leq S_{i^\star}\sqrt{\tfrac{1+\epsilon_\cur}{1-\epsilon_\cur}} \bigr\}$
    }
    Return $\M_{i^\star}$

    \caption{An improvement to Algorithm \ref{alg:pick} when there is a relatively clear winner among the candidate preconditioners.}\label{alg:improvement}
\end{algorithm2e}

We can prove that Algorithm \ref{alg:improvement} is actually an improvement over Algorithm \ref{alg:pick} by making an anti-concentration assumption about the input stabilities.

\begin{theorem}\label{thm:pick_better}
    Let $\A\in\F^{d\times d}$ be an arbitrary matrix, $\M_1,\M_2,\ldots,\M_n\in\F^{d\times d}$ be invertible candidate preconditioners for $\A$, $0 < \epsilon < 1/2$, and $0 < \delta < 1$. Denoting $i^\star \in \operatorname{arg\,min}_{1\leq j \leq n}\|\I - \M_j^{-1}\A\|_\f$, we will write
    \[
        F(t) = \frac{1}{n}\biggl|\biggl\{ j : j\in\{1,2,\ldots,n\} \text{ and }\frac{\|\I - \M_j^{-1}\A\|_\f}{\|\I - \M_{i^\star}^{-1}\A\|_\f} \leq 1+t \biggr\}\biggr|
    \]
    for the (shifted) cumulative distribution function of the input relative stabilities. If $F(t) \leq ct$ uniformly over $t\in [\epsilon/2,2]$ for some positive constant $c$, then Algorithm \ref{alg:improvement} returns a preconditioner $\M_i$ satisfying
    \[
        \|\I - \M_i^{-1}\A\|_\f \leq \sqrt{\frac{1+\epsilon}{1-\epsilon}}\,\min_{1\leq j\leq n} \|\I - \M_j^{-1}\A\|_\f.
    \]
    with probability at least $1 - \delta$ using strictly fewer floating point operations than running $24n(1+\tfrac{2c}{\epsilon})\log\tfrac{2n}{\delta} + 24n(1+\tfrac{2c}{\epsilon})\log\log_2\tfrac{2}{\epsilon}$ iterations of the preconditioned conjugate gradients algorithm in $\A$ with the most expensive preconditioner $\M_j$ in terms of the number of floating point operations required to compute $\M^{-1}_j\y$ for input vectors $\y$.
\end{theorem}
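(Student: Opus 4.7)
The plan is to mirror the proof of Theorem \ref{thm:pick} round-by-round and then use the anti-concentration assumption to control the geometric shrinkage of the active set $P$. I would split the argument cleanly into a correctness part and a runtime part, tying them together by working on a single ``good event'' on which all stability estimates have the desired accuracy.

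\textbf{Correctness via a single union bound.} In round $t$ we take $k_t = (6/\epsilon_\cur^2)\log(2T|P|/\delta)$, so Theorem \ref{thm:stab} says each $\Stab(\A,\M_i,k_t)$ is a $\sqrt{1\pm\epsilon_\cur}$-accurate estimate of $\|\I-\M_i^{-1}\A\|_\f$ except with probability at most $\delta/(T|P|) \leq \delta/(Tn)$. Union-bounding over the at most $n$ preconditioners tested in each of the $T$ rounds gives total failure probability at most $\delta$. On the complement---the ``good event''---I would next show that a true minimizer $j_0 \in \arg\min_{j}\|\I-\M_j^{-1}\A\|_\f$ is never filtered out: since $S_{j_0}\leq \sqrt{1+\epsilon_\cur}\|\I-\M_{j_0}^{-1}\A\|_\f$ and $S_{i^\star}\geq \sqrt{1-\epsilon_\cur}\|\I-\M_{i^\star}^{-1}\A\|_\f \geq \sqrt{1-\epsilon_\cur}\|\I-\M_{j_0}^{-1}\A\|_\f$, the survival test $S_{j_0}\leq S_{i^\star}\sqrt{(1+\epsilon_\cur)/(1-\epsilon_\cur)}$ passes. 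Combining this with minimality of $S_{i^\star}$ among $S_j$ over $P$ and the approximation bounds---exactly as in the proof of Theorem \ref{thm:pick}---shows $\|\I-\M_{i^\star}^{-1}\A\|_\f \leq \sqrt{(1+\epsilon_\cur)/(1-\epsilon_\cur)}\|\I-\M_{j_0}^{-1}\A\|_\f$ at every round, so the terminal $\epsilon_\cur \leq \epsilon$ yields the stated $\sqrt{(1+\epsilon)/(1-\epsilon)}$ guarantee.

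\textbf{Geometric shrinkage of $|P_t|$.} The key runtime ingredient is to show $|P_{t+1}|$ shrinks at the rate dictated by the anti-concentration assumption. On the good event, any $i$ surviving the filter at round $t$ satisfies, by the same chain of inequalities,
\[
    \frac{\|\I-\M_i^{-1}\A\|_\f}{\|\I-\M_{j_0}^{-1}\A\|_\f} \leq \frac{1+\epsilon_\cur}{1-\epsilon_\cur} \leq 1 + 4\epsilon_\cur,
\]
using $\epsilon_\cur \leq 1/2$ throughout. Since $4\epsilon_\cur \in [2\epsilon,\,2]\subseteq [\epsilon/2,\,2]$ across all rounds, the hypothesis $F(t)\leq ct$ applies and gives $|P_{t+1}| \leq n\,F(4\epsilon_\cur) \leq 4cn\epsilon_\cur = 4cn\cdot 2^{-t}$.

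\textbf{Summing the work.} Each query of $\Stab(\A,\M_i,k_t)$ takes floating-point operations bounded by a single preconditioned CG step with $\M_i$, so the total cost is bounded by $\sum_{t=1}^T k_t |P_t|$ PCG-equivalent iterations against the most expensive preconditioner. Using $k_t = 6\cdot 4^t \log(2T|P_t|/\delta)$ and bounding $|P_t|\leq n$ inside the logarithm, round $1$ contributes at most $24n\log(2Tn/\delta)$ while round $t\geq 2$ contributes at most $6\cdot 4^t \cdot 4cn\cdot 2^{-(t-1)}\log(2Tn/\delta) = \mathcal{O}(cn\cdot 2^t\log(2Tn/\delta))$. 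The geometric sum $\sum_{t=2}^T 2^t = \mathcal{O}(2^T) = \mathcal{O}(1/\epsilon)$ since $T = \lceil\log_2(1/\epsilon)\rceil$, and splitting $\log(2Tn/\delta) = \log(2n/\delta) + \log T$ with $T\leq \log_2(2/\epsilon)$ gives exactly the structure $\mathcal{O}(n(1+c/\epsilon)(\log(2n/\delta) + \log\log_2(2/\epsilon)))$ claimed.

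The main obstacle I expect is matching the explicit leading constant $24n(1+2c/\epsilon)$ rather than merely the asymptotic form: cleanly accounting for the $(1\pm\epsilon_\cur)$ factors in the survival inequality, bounding $2\epsilon_\cur/(1-\epsilon_\cur)$ sharply instead of by $4\epsilon_\cur$, and using that the geometric sum is dominated by its last term, will all be required to squeeze the constant down to the stated value. Everything else is routine once the good event is in hand.
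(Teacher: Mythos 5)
Your proposal follows essentially the same route as the paper's proof: a single union-bound good event giving simultaneous $\sqrt{1\pm\epsilon_\cur}$ accuracy across all rounds, an induction showing the true minimizer is never filtered, the bound $\tfrac{1+\epsilon_\cur}{1-\epsilon_\cur}\leq 1+4\epsilon_\cur$ combined with the anti-concentration hypothesis to get $|P|\leq 4cn2^{-t}$, and a geometric sum for the runtime. The constant-matching issue you flag is handled in the paper simply by pairing the post-filter set size $4cn2^{-t}$ with the accuracy $\epsilon_\cur=2^{-t}$ in the sum over $t=1,\ldots,T-1$, adding the first round's $24n\log\tfrac{2nT}{\delta}$ multiplies separately, and substituting $2^T\leq\tfrac{2}{\epsilon}$ and $T\leq\log_2\tfrac{2}{\epsilon}$.
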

\begin{proof}
    The same Bonferroni-correction argument from the proof of Theorem \ref{thm:pick} ensures that
    \begin{equation}\label{eq:approx_better}
        \sqrt{1-\epsilon_\cur}\,\|\I-\M_i^{-1}\A\|_\f \leq S_i \leq \sqrt{1+\epsilon_\cur}\,\|\I-\M_i^{-1}\A\|_\f,
    \end{equation}
    simultaneously for all $i\in P$ over the course of the algorithm, except with probability at most $\delta$. The rest of the proof will only rely on property \ref{eq:approx_better}, so everything we say will hold with this same probability.
    
    If $i_t^\star$ is the $i^\star$ set in step $t$ of the algorithm and $i^\star \in \operatorname*{arg\,min}_{1\leq i\leq n}\|\I - \M_i^{-1}\A\|_\f$ is in $P$ before the filtering at the end of step $t$, Equation \ref{eq:approx_better} implies
    \begin{equation*}
        S_{i^\star} \leq \sqrt{1+\epsilon_\cur}\|\I-\M_{i^\star}^{-1}\A\|_\f \leq \sqrt{\frac{1+\epsilon_\cur}{1-\epsilon_\cur}}\sqrt{1-\epsilon_\cur}\|\I-\M_{i_t^\star}^{-1}\A\|_\f \leq \sqrt{\frac{1+\epsilon_\cur}{1-\epsilon_\cur}}S_{i_t^\star}.
    \end{equation*}
    Thus, since $i^\star\in P$ initially, we know by induction that $i^\star\in P$ throughout the process of the entire algorithm. Now consider $P$ in the final step $t=T$ of Algorithm \ref{alg:improvement}. Since $i^\star \in P$,
    \begin{equation}
        \sqrt{1 - \epsilon_\cur}\|\I - \M_{i^\star_T}^{-1}\A\|_\f \leq S_{i^\star_T} \leq S_{i^\star} \leq \sqrt{1 + \epsilon_\cur}\|\I - \M_{i^\star}^{-1}\A\|_\f.
    \end{equation}
    Rearranging and realizing that $\epsilon_\cur = 2^{-\lceil \log_2\tfrac{1}{\epsilon}\rceil} \leq 2^{-\log_2\tfrac{1}{\epsilon}} = \epsilon$ at $t=T$ gives our desired approximation guarantee.

    Now we will exhibit the runtime bound by bounding $|P|$ at each step of Algorithm \ref{alg:improvement}. We claim that $|P| \leq 4cn 2^{-t}$ for all $t=1,2,\ldots,T$. To see this, note that if a candidate preconditioner $\M_j$ is retained after filtering in any step $t$ of the algorithm,
    \begin{equation}
        \|\I - \M_j^{-1}\A\|_\f \leq \frac{S_j}{\sqrt{1-\epsilon_\cur}} \leq \frac{\sqrt{1+\epsilon_\cur}}{1-\epsilon_\cur}S_{i^\star_t} \leq \frac{1+\epsilon_\cur}{1-\epsilon_\cur}\|\I - \M_{i^\star}^{-1}\A\|_\f.
    \end{equation}
    Thus the number of elements in $P$ just after step $t$ in the algorithm is at most $n F(4\epsilon_\cur) \leq 4cn 2^{-t}$ since $\tfrac{1+x}{1-x}\leq 1+4x$ for $0 \leq x \leq 1/2$. Our runtime bound follows from the sum
    \begin{equation}
        \sum_{t=1}^{T-1} |P_t|\frac{6}{(2^{-t})^2}\log\frac{2T|P_t|}{\delta} \leq \sum_{t=1}^{T-1} 4cn 2^{-t}\tfrac{6}{(2^{-t})^2}\log\tfrac{2nT}{\delta} \leq 24cn 2^T\log\tfrac{2nT}{\delta}
    \end{equation}
    where $P_t$ is the set $P$ during iteration $t$ of the algorithm. This gives the number of matrix-vector multiplies of the form $(\I - \M^{-1}\A)\q$ used by the algorithm \emph{after} the first step. To see the final form, add on the $24n\log\tfrac{2nT}{\delta}$ multiplies done during the first iteration $t=1$ and plug in $T = \lceil\log_2\tfrac{1}{\epsilon}\rceil \leq \log_2\tfrac{1}{\epsilon} + 1 = \log_2\tfrac{2}{\epsilon}$.
\end{proof}

The anti-concentration condition in Theorem \ref{thm:pick_better} intuitively asserts that the stabilities of the preconditioners do not cluster around the minimal stability. This is satisfied, for example, if at most some number $m$ of the candidate preconditioners have stability within a multiplicative factor $3$ of the optimal stability. The resulting constant $c = \tfrac{2m}{n\epsilon}$ gives an asymptotic runtime bound for Algorithm \ref{alg:improvement} of $\mathcal{O}(n\log\tfrac{n}{\delta} + n\log\log\tfrac{1}{\epsilon} + \tfrac{m}{\epsilon^2}\log\tfrac{n}{\delta} + \tfrac{m}{\epsilon^2}\log\log\tfrac{1}{\epsilon})$, decoupling the linear dependence in $n$ with the polynomial accuracy dependence on $1/\epsilon^2$. Such an improvement is serious when $n$ is moderately large; while this example is contrived many other distributions on input data satisfy the assumptions of Theorem \ref{thm:pick_better} with the same constant $c$. 

Of course, one would hope that Algorithm \ref{alg:improvement} does not perform poorly when the input data assumptions made in Theorem \ref{thm:pick_better} are not satisfied. For example, this would happen when all preconditioners have extremely similar performance, to the point that even our target accuracy $\epsilon$ cannot distinguish their stabilities. Luckily, a constant $c = \tfrac{2}{\epsilon}$ always works in Theorem \ref{thm:pick_better}, so Algorithm \ref{alg:improvement} never suffers more than a multiplicative $\mathcal{O}(\log\log\tfrac{1}{\epsilon})$ increase over Algorithm \ref{alg:pick} in number of floating point operations needed to select a preconditioner.

\section{Experiments}
\label{sec:experiments}

The present section will jointly test the hypothesis that preconditioner stability is a good forecast for preconditioner quality along with the performance of our algorithms. This is done by evaluating how well Algorithm \ref{alg:pick} can select a candidate preconditioner which minimizes the number of conjugate gradients iterations required to achieve some fixed approximation quality in various situations. In Section \ref{sec:experiments_sparse}, we detail one experiment of this type for generic real-world systems from the SuiteSparse Matrix Collection \cite{davis2011university}. Section \ref{sec:experiments_kernel} details our other numerical experiment, where we use Algorithm \ref{alg:pick} to contribute improvements to the growing literature on preconditioned solvers for kernel regression problems.

\subsection{Experiments with Sparse Systems}
\label{sec:experiments_sparse}

First we attempt a generic experiment on a collection of real-world sparse linear systems and simple preconditioners, testing how well the preconditioner chosen a-priori by Algorithm \ref{alg:pick} compares to the minimal-iterations preconditioner. For the target system $\A\x=\b$, we fix a sampled $\b \sim \mathcal{N}(\0,\I)$ for the entire experiment. The positive definite matrices $\A$ are taken from the SuiteSparse/University of Florida Sparse Matrix Collection \cite{davis2011university}. We include all matrices from the \verb+Boeing+ and \verb+GHS_psdef+ groups which have between 100,000 and 2,250,000 non-zero entries and are strictly positive definite.

We provide nine candidate preconditioners for Algorithm \ref{alg:pick} to select between, using block diagonal preconditioners in order to avoid existence issues of other common preconditioning methods \cite{benzi2002preconditioning}:
\begin{itemize}
    \item The first candidate preconditioner is the trivial preconditioner $\I$, which is equivalent to using no preconditioner at all.
    \item The preconditioner $\Blk_\ell$ denotes a block-diagonal pinching/truncation of the matrix $\A$ with block size $\ell$.
    \item The preconditioner $\RCM_\ell$ is the same block-diagonal pinching, but performed after a Reverse Cuthill-McKee ordering of the matrix \cite{cuthill1969reducing}.
\end{itemize}
To ensure uniqueness and clarity, blocking is performed by taking the matrix $\A\in\F^{d\times d}$ and constructing a block diagonal matrix $\M$ with blocks of the form $\A(m\ell:\min\{d,(m+1)\ell\},m\ell:\min\{d,(m+1)\ell\})$ for $m=0,1,2,\ldots$. Since $\A$ is positive definite, the resulting preconditioners $\M$ are also positive definite \cite[Ex. 2.2.1.(viii)]{bhatia2009positive}.

In Table \ref{table:test_preconditioners} we present the number of iterations the preconditioned conjugate gradients algorithm took for each test matrix and preconditioner pair. The algorithm was run until the approximate solution $\tilde\x$ satisfied $\|\A\tilde\x - \b\|_2 \leq 10^{-9}\|\b\|_2$. The number of iterations was limited to 50,000. Entries in Table \ref{table:test_preconditioners} achieving this iteration limit are overwritten with `---'. The conjugate gradients algorithm applied to the matrices \verb+bcsstk36+, \verb+bcsstk38+, \verb+msc23052+, and \verb+vanbody+ did not converge with any candidate preconditioner, so they are omitted in Table \ref{table:test_preconditioners}.

\begin{table}[t]
    \centering
    \caption{This table reports the number of iterations taken by the conjugate gradients algorithm to report an approximate solution $\tilde\x$ to the linear system $\A\x=\b$ for specified test matrices $\A$, a constant sampled standard normally distributed $\b\sim\mathcal{N}(\0,\I)$, and various candidate preconditioners.}
    \label{table:test_preconditioners}
    \setlength{\tabcolsep}{0.4em}
    \begin{tabular}{@{}llllllllll@{}}
        \toprule
        Matrix & \multicolumn{9}{c}{Conjugate Gradients Iterations With Various Preconditioners}   \\\cmidrule(r){1-1}\cmidrule(l){2-10}
        & $\I$ & $\Blk_1$ & $\Blk_{10}$ & $\Blk_{25}$ & $\Blk_{50}$ & $\Blk_{75}$ & $\Blk_{100}$ & $\RCM_{75}$ & $\RCM_{100}$\\\midrule
        \texttt{apache1} &3,538&3,513&3,286&3,283&3,270&3,265&3,269&3,710&3,693\\
        \texttt{crystm01} &122&54&39&34&30&27&27&24&21\\
        \texttt{crystm02} &138&54&38&35&34&29&30&24&24\\
        \texttt{crystm03} &143&54&38&34&33&30&29&25&24\\
        \texttt{cvxbqp1} &16,424&11,337&11,338&11,332&11,331&11,330&11,328&10,148&10,353\\
        \texttt{gridgena} &3,658&3,542&2,659&2,572&2,504&2,504&2,479&2,892&2,863\\
        \texttt{jnlbrng1} &139&131&126&126&125&125&125&130&130\\
        \texttt{minsurfo} &94&88&64&63&63&62&62&88&88\\
        \texttt{msc10848} &---&5,659&3,791&3,028&2,793&2,656&2,628&2,192&2,092\\
        \texttt{obstclae} &66&65&49&48&47&47&47&65&65\\
        \texttt{oilpan} &48,291&28,065&12,804&8,167&5,476&4,992&4,127&4,757&4,433\\
        \texttt{torsion1} &66&65&49&48&47&47&47&65&65\\
        \texttt{wathen100} &327&45&44&44&44&44&44&42&42\\
        \texttt{wathen120} &378&45&45&45&44&44&44&43&43\\
        \bottomrule\\
    \end{tabular}
\end{table}

Even though larger block sizes $\ell$ ought to create better approximations of the original matrix, there are situations when smaller block sizes result in fewer conjugate gradients iterations. Similarly, there are some situations when the original ordering of the data is preferable over the Reverse Cuthill-McKee ordering, and vice-versa. As a result, it is unclear a-priori which preconditioner one should choose to solve the linear system, and this is why someone might wish to use Algorithm \ref{alg:pick} to automate that choice.

We test this use of Algorithm \ref{alg:pick} under two parameter settings $k=10$ and $k=50$. Algorithm \ref{alg:pick} is run for 1,000 independent trials for each matrix-preconditioner-$k$ pairing. After the fact, we compare the number of iterations of preconditioned conjugate gradients would be necessary when using the recommendation of Algorithm \ref{alg:pick} relative to the minimal number of iterations possible if we knew in advance how many iterations each preconditioner would use.

\subsubsection{Results}

The results of our generic real-world-use experiment are presented in Table \ref{table:sparse_result}. Every cell is an approximation ratio, i.e. the number of iterations an algorithm for selecting preconditioners took divided by the minimal number of iterations possible using our set of candidate preconditioners. As such, an entry of 1.00 is optimal and represents the minimal-number-of-iterations preconditioner being correctly selected. The column `Worst-Case' reports the approximation ratio if one deterministically selected the maximal-number-of-iterations preconditioner in each setting. The column `Random' reports the expected approximation ratio if one were to select a candidate preconditioner from Table \ref{table:test_preconditioners} uniformly at random. The columns corresponding to Algorithm \ref{alg:pick} gives statistics of the empirical distribution of approximation ratios seen over the 1,000 independent trial runs of the method.

\begin{table}[t]
    \centering
    \caption{This table summarizes the performance of Algorithm \ref{alg:pick} for each matrix in Table \ref{table:test_preconditioners}, reporting statistics of the empirical number of iterations given by the algorithm compared to picking the worst-possible preconditioner (in terms of number of CG iterations) or choosing arbitrarily at random. Since the conjugate gradients algorithm did not converge for the matrix \texttt{msc10848} with no preconditioner, the `Worst-Case' and `Random' columns are \emph{lower} bounds for their true values in that row only.}
    \label{table:sparse_result}
    \setlength{\tabcolsep}{0.5em}
    \begin{tabular}{@{}lcccccccc@{}}
        \toprule
        Matrix & Worst-Case & Random & \multicolumn{6}{c}{Algorithm \ref{alg:pick} Approximation Ratio} \\ \cmidrule(r){1-1} \cmidrule(lr){2-2} \cmidrule(lr){3-3} \cmidrule(l){4-9}
        &&&\multicolumn{3}{c}{$k=10$}&\multicolumn{3}{c}{$k=50$}\\\cmidrule(lr){4-6}\cmidrule(l){7-9}
        &&&Min&Mean&Max&Min&Mean&Max\\\midrule
        \texttt{apache1}  &1.14&1.05&1.00&1.00&1.00&1.00&1.00&1.00\\
        \texttt{crystm01}  &5.81&2.00&1.00&1.00&1.00&1.00&1.00&1.00\\
        \texttt{crystm02}  &5.75&1.88&1.00&1.00&1.00&1.00&1.00&1.00\\
        \texttt{crystm03}  &5.96&1.90&1.00&1.00&1.00&1.00&1.00&1.00\\
        \texttt{cvxbqp1}  &1.62&1.15&1.12&1.12&1.12&1.12&1.12&1.12\\
        \texttt{gridgena}  &1.48&1.15&1.00&1.00&1.01&1.00&1.00&1.00\\
        \texttt{jnlbrng1}  &1.11&1.03&1.04&1.04&1.04&1.04&1.04&1.04\\
        \texttt{minsurfo}  &1.52&1.20&1.00&1.00&1.00&1.00&1.00&1.00\\
        \texttt{msc10848}  &\hphantom{2}23.90&3.97&1.00&1.00&1.00&1.00&1.00&1.00\\
        \texttt{obstclae}  &1.40&1.18&1.00&1.00&1.00&1.00&1.00&1.00\\
        \texttt{oilpan}  &\hphantom{2}11.70&3.26&1.00&1.09&1.15&1.07&1.08&1.15\\
        \texttt{torsion1}  &1.40&1.18&1.00&1.00&1.00&1.00&1.00&1.00\\
        \texttt{wathen100}  &7.79&1.79&1.00&1.00&1.00&1.00&1.00&1.00\\
        \texttt{wathen120}  &8.79&1.89&1.00&1.00&1.00&1.00&1.00&1.00\\
        \bottomrule\\
    \end{tabular}
\end{table}

For 10 of the 14 test matrices reported, setting $k=10$ always picks the optimal preconditioner for the problem across every one of the 1,000 trials. If we take $k=50$, this happens for 11 of the 14 test matrices. Moreover, even when the accuracy parameter $k=10$, the returned preconditioner never needs more than 15\% iterations over than the optimal choice.

One might wonder if taking $k$ to be even larger would result in approximation ratios concentrating more uniformly at the ideal 1.00 mark. This will not happen in general, and is where the good-proxy hypothesis is put to the test. For the \verb+oilpan+ matrix, increasing $k$ from $10$ to $50$ \emph{raises} the best-seen approximation ratio given by Algorithm \ref{alg:pick} from 1.00 to 1.07.  Increasing $k$ causes the preconditioner returned by Algorithm \ref{alg:pick} to concentrate further around the true minimal-stability preconditioner (see Theorem \ref{thm:pick},) and so this implies that the preconditioner stability criterion itself is not perfect and will not in general forecast the exact preconditioner resulting in the minimal number of conjugate gradients iterations.

\subsection{Experiments with Kernel Regression Preconditioners}
\label{sec:experiments_kernel}

This section will show that Algorithm \ref{alg:pick} can turn two simple preconditioners for the standard kernel regression problem into a robust, state-of-the-art preconditioning method. As a corollary of this investigation, we exhibit how Algorithm \ref{alg:pick} performs well in situations when the `minimal accuracy' criterion for selecting preconditioners fails, something left unanswered in the previous experiment.

\subsubsection{A Quick Review}

Kernel regression is a common statistical technique for nonlinear regression. In this setting, we have a dataset $\{(\x_1,y_1),(\x_2,y_2),\ldots,(\x_d,y_d)\}$ consisting of $\x_i \mapsto y_i$ mappings from Euclidean space $\R^d$ to the real line $\R$. We wish to find coefficients $\aalpha \in \R^d$ so that the functional mapping
\begin{equation}
    \x \mapsto f(\x) = \sum_{i=1}^d \aalpha_i k(\x,\x_i)
\end{equation}
faithfully represents the empirical mapping in the sense that $f(\x_i) \approx y_i$. In general, $k(\x,\y)$ is just required to be a positive definite kernel, but in our experiment, we will only use the squared exponential kernel $k(\x,\y) = \exp(-\tfrac{\|\x-\y\|_2^2}{2\ell^2})$, parametrized by the length-scale $\ell$ which controls the derivative of the model $f(\x)$. The coefficients $\aalpha$ are found by solving the system
\begin{equation}
    \aalpha = (\K + \sigma_n^2\I)^{-1}\y
\end{equation}
where the positive definite Gram matrix $\K_{ij} = k(\x_i,\x_j)$, the output vector $\y = (y_1,y_2,\ldots,y_d)$, and the noise standarad deviation $\sigma_n>0$ is used for regularization so that the model $f(\x)$ fits well on out-of-sample data. In almost all kernel regression problems, $\K$ and hence $\K+\sigma_n^2\I$ are dense. See \cite{williams2006gaussian} for more background on this model and associated inference procedure.

\subsubsection{Related Work}

This experiment will test a preconditioning procedure for solving the linear system $(\K+\sigma_n^2\I)\aalpha = \y$ via conjugate gradients. There has been a recent interest in this general iterative framework for kernel regression, which largely focuses on creating quality preconditioners for the Gram matrices $\K$ \cite{avron2017faster,cutajar2016preconditioning,halko2011finding,rudi2017falkon,rudi2018fast}. This is necessary since for reasonable parameter settings even $\K+\sigma_n^2\I$ is often ill-conditioned.

Cutajar et al. \cite{cutajar2016preconditioning} performed some initial leg-work in this area, proposing eight candidate preconditioners. These preconditioners include a block-diagonal approximation of $\K+\sigma_n^2\I$, adding a larger regularizer $\sigma_n^2$ and solving recursively, a Nystr\"om approximation of the Gram matrix using $\sqrt{n}$ data points as inducing points chosen uniformly at random, a coupling of the Nystr\"om approximation with a block-diagonal approximation, or replacing $\K$ with an optimal low-rank factorization which can be computed via a randomized SVD \cite{halko2011finding} or the Lanczos method \cite[Sec. 10.1]{golub2012matrix}. Both \cite{cutajar2016preconditioning} and the work \cite{avron2017faster} of Avron et al. use the Fourier features method of Rahimi and Recht \cite{rahimi2008random} to create a preconditioner which replaces $\K$ with a sketched version $\tilde\K$. The latter paper \cite{avron2017faster} also proposes using the \textsc{TensorSketch} method of \cite{pagh2013compressed} for creating a sketched preconditioner when using the polynomial kernel $k(\x,\y)=(\x^*\y)^q$, though the necessary sketching dimension in their upper bounds is exponential in $q$. The work of Rudi et al. \cite{rudi2017falkon} also uses the Nystr\"om-based preconditioner like \cite{cutajar2016preconditioning}, combining it with other computational tricks. This Nystr\"om-based approach was refined with approximate leverage score sampling in more recent work by Rudi et al. \cite{rudi2018fast}.

An empirical issue within many of the above methods is illustrated perfectly in Figure 1 of \cite{cutajar2016preconditioning}. For every preconditioner presented therein, there exist parameter settings for which using no preconditioner results in fewer iterations than using the preconditioner when solving for $\aalpha$ via conjugate gradients. As such, it is unclear how one would choose a preconditioner in practice which makes these solvers work well in practice.

\subsubsection{Two Simple Geometrically Driven Preconditioners}

Here we detail the two candidate preconditioners which we will use in our experiments. They both utilize a geometrically-motivated reordering of the data to empirically achieve superior performance to the preconditioners of \cite{cutajar2016preconditioning} for many settings of the kernel parameters $\sigma_n$ and $\ell$.

The first preconditioner is a simple block diagonal pinching of a reordering of the data. The kernel regression model under the squared exponential kernel effectively asserts that points nearby in $\ell^2$ ought to have similar outputs $y$. If the input data is highly clustered in $\ell^2$, our model then ought to largely ignore points from different clusters when considering a point in some cluster. The first preconditioning algorithm turns this `ought to' statement directly into an approximation of the Gram matrix $\K$. We first cluster the data $\{\x_1,\x_2,\ldots,\x_d\}$ in $\ell^2$ via the \verb/k-means/ or \verb/k-means++/ \cite{arthur2007k} algorithm with $c = \lceil\sqrt{d}\rceil$ clusters, constructing a permutation matrix $\PP$ that places points in the same cluster next to each other on the Gram matrix. At this point, we precondition the re-ordered system $(\PP\K\PP^* + \sigma_n^2\I)\PP\aalpha = \PP\y$ by creating a block-diagonal pinching of the re-ordered matrix $\PP\K\PP^*$ where each block corresponds to the points within a cluster. The resulting preconditioner is that pinching $\hat\K$ plus the true noise term $\sigma_n^2\I$.

The second preconditioner is a slightly more complex version of the first. After computing the permuted matrix $\PP\K\PP^*$, we compute a truncated rank-$r$ approximation $\U\Lambda\U^*$ of $\PP\K\PP^*$ where $\Lambda\in\R^{r\times r}$ is diagonal and $\U\in\R^{d\times r}$ has orthonormal columns. At this point we compute the same block diagonal pinching $\tilde\EE$ of the error in approximation $\EE = \PP\K\PP^* - \U\Lambda\U^*$. The resulting preconditioner is then $\U\Lambda\U^* + \tilde\EE + \sigma_n^2\I$.

To show that these are feasible to use as preconditioners, it suffices to consider the latter since it reduces to the former when $r=0$. If $r$ is a constant, we can solve systems in this preconditioner using the Woodbury identity \cite[Sec. 2.1.4]{golub2012matrix} in $\mathcal{O}(n^2)$ floating point operations under the assumption that the cluster sizes are all within a constant factor of each other. This is after computing a one-time Cholesky decomposition of the block-diagonal pinching in $\mathcal{O}(n^2)$ floating point operations. Similarly, computing the low-rank factorization takes $\mathcal{O}(n^2)$ floating point operations using either the Implicitly-Restarted Lanczos method \cite{sorensen1997implicitly} or a Randomized SVD \cite{halko2011finding}, though for higher ranks $r$ the latter method is preferable. Since matrix-vector multiplies with $\K + \sigma_n^2\I$ take $\mathcal{O}(n^2)$ floating point operations, these preconditioners do not raise the per-iteration complexity of the conjugate gradients method.

Moreover, if we fix the resulting sparsity pattern of the preconditioner, the former preconditioner exactly minimizes the accuracy $\|\M - \PP\K\PP^* + \sigma_n^2\I\|_\f$ over all matrices with the same sparsity pattern. Since the identity matrix $\I$ also has this sparsity pattern, we would always choose the preconditioner over the identity matrix if using the accuracy criterion, and a similar logic shows the same holds for the latter preconditioner.

The work of Cutajar et al. \cite{cutajar2016preconditioning} already suggests that block-diagonal approximations to kernel matrices can be faithful for small length-scales $\ell$, and that low-rank approximations can be faithful for large length-scales $\ell$. Following this helpful research, the main insight in our proposed preconditioners is that permutating the data in a way that points close together in Euclidean space are close on the matrix ensures a block-diagonal approximation is even more faithful. The permutation we use relies on the Euclidean geometry of our kernel, and other permutations would need to be used for kernels other than the squared exponential kernel.

\subsubsection{Experimental Design}

We consider three datasets along with various parameter settings, aiming to see how many iterations the conjugate gradients algorithm takes with each of our proposed preconditioners, as well as using Algorithm \ref{alg:pick}, in comparison to using no preconditioner at all. This experiment is identical to one from \cite{cutajar2016preconditioning} except with different preconditioners, and ideally our preconditioned solvers significantly reduce the number of iterations used by the standard conjugate gradients algorithm.

The datasets have names \verb+Concrete+, \verb+Power+, and \verb+Protein+, and are identically the same as the data in \cite{cutajar2016preconditioning} pulled from the UCI Machine Learning Repository \cite{asuncion2007uci}. The \verb+Concrete+ dataset consists of $d=$ 1,029 data points in $\R^8$. The \verb+Power+ dataset consists of $d=$ 9,567 data points in $\R^4$. The \verb+Protein+ dataset consists of $d=$ 45,729 data points in $\R^9$.

For each of these datasets, and each pair of parameters chosen from $\ell \in \{10^{-3},$ $10^{-2},\ldots, 10^{2}\}$ and $\sigma_n^2 \in \{10^{-2}, 10^{-4}, 10^{-6}\}$, we construct a kernel system $(\K + \sigma_n^2\I)\aalpha = \y$. This system is solved using conjugate gradients with no preconditioner, the geometric preconditioner with no low-rank approximation, and the geometric preconditioner with a rank $r=25$ low-rank approximation. We also solve the system using the preconditioner chosen by one run of Algorithm \ref{alg:pick} among no preconditioner, the purely block-diagonal geometric preconditioner, and the rank $r=25$ low-rank approximation-based geometric preconditioner, using an accuracy parameter $k=10$. We also attempt using Algorithm \ref{alg:pick} with the same $k=10$ if we restrict the choice to the two geometric preconditioners, ruling out the use of no preconditioner. In solving these systems, we record the number of conjugate gradients iterations needed to achieve a residual norm less than $10^{-5}\sqrt{d}$ as in \cite{cutajar2016preconditioning}; a relative tolerance of $10^{-15}\|\y\|_2$ is also specified, though this is vacuous in comparison to the absolute tolerance. The solver is stopped after 10,000 iterations if the residual has not converged to within tolerance by then. The low-rank approximations are computed via \verb+ARPACK+ \cite{lehoucq1998arpack} with a tolerance parameter of $10^{-5}$.

\subsubsection{Results}

\begin{figure}[t]
    \centering
    \caption{This figure presents the relative improvement of using our proposed preconditioners, or the one automatically chosen by Algorithm \ref{alg:pick}, with respect to using no preconditioner at all. Each individual matrix corresponds to a specific preconditioner and dataset pair. Each row gives the value of $\log\sigma_n^2$ used in the experiment, whereas each column corresponds to $\log\ell$. The absence of red cells in the result matrices corresponding to `Our Method' indicates significant improvement over the results in \cite{cutajar2016preconditioning}.\vspace{0.5em}}
    \includegraphics[width=0.95\linewidth]{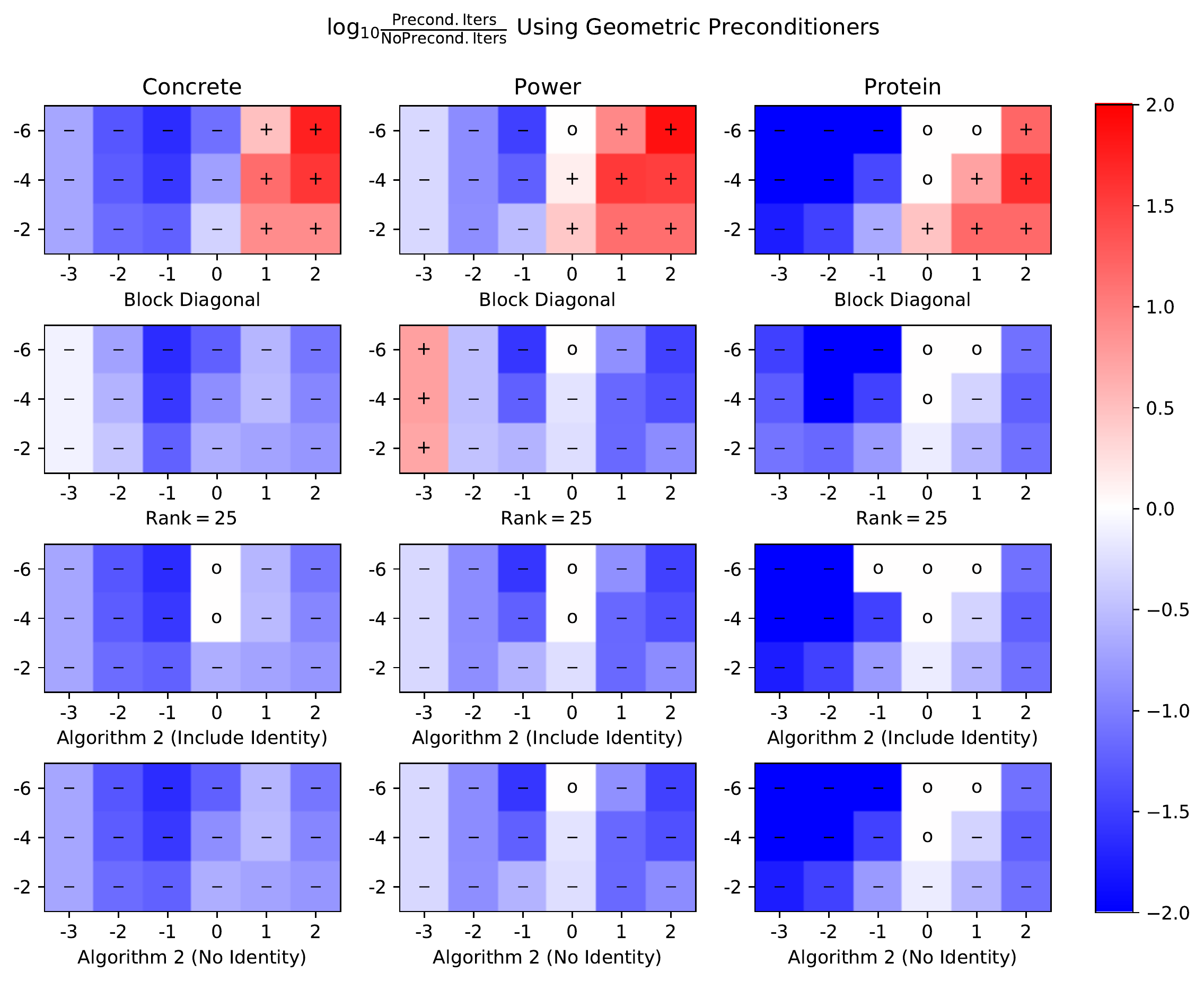}
    \label{fig:geo}
\end{figure}

Figure \ref{fig:geo} illustrates the relative improvement different preconditioning schemes have over using no preconditioner for each dataset and parameter combination. Each cell gives the logarithm of the ratio of the preconditioned conjugate gradients iterations to the non-preconditioned conjugate gradients iterations, i.e. the order of magnitude of the improvement granted by using the preconditioner. Accordingly, negative values (blue or `--') represent improvement through using the preconditioner, while positive values (red or `+') correspond to the preconditioned system requiring \emph{more} iterations than using no preconditioner at all. Five of the cells for the \verb+Protein+ dataset with the purely block diagonal geometric preconditioner have relative improvements of more than two orders of magnitude. Another three preconditioners using a low-rank approximation with the \verb+Protein+ dataset have this property. In spite of this, we restrict the visual range of the plot from $-2$ to $2$ to allow Figure \ref{fig:geo} to be compared easily to the identical presentation in Figure 1 of \cite{cutajar2016preconditioning}. No cell values exceed $2$.

First we comment purely on the performance of the two geometrically motivated preconditioners. The main take-away is that the geometric permutation based on the \verb+k-means+ algorithm appears to truly help in creating a faithful preconditioner. As evidence, we can point to the fact that the simple geometric block-diagonal preconditioner gives, for five different parameter settings with the \verb+Protein+ dataset, a relative improvement better than every single preconditioner-parameters-dataset pair in \cite{cutajar2016preconditioning}. Phrased differently, at these parameter settings the number of iterations drops from 189, 111, 2,345, 618, and 10,000 (did not converge) to 1, 1, 3, 3, and 94 iterations, respectively. Moreover, the geometric preconditioner using a low-rank approximation for the \verb+Concrete+ dataset always outperforms using no preconditioner, something no preconditioner proposed in \cite{cutajar2016preconditioning} can do. These improvements are genuine and stark, and again achieved by an extremely simple method just by relying on geometry.

Of course, one can rightfully point out that the block diagonal pinching is not robust as a preconditioner, just like many methods from \cite{cutajar2016preconditioning}. This is true; the block diagonal approximation works well for small length scales $\ell$, as in these circumstances dependencies $\K_{ij}$ between far away data points $\x_i$ and $\x_j$ are shrunk, resulting in a genuine clustering of the underlying data where the intuition we used in justifying the preconditioner carries through. For large $\ell$, the block diagonal preconditioner performs poorly because the matrix $\K$ looks more uniform and doesn't have a genuine clustered structure. Luckily, the more sophisticated preconditioners with added rank-25 terms perform well in precisely this regime, as the low-rank term can capture uniform structure in the Gram matrix $\K$. While this complicated preconditioner is not perfect, it is more robust to parameter changes than the analogous SVD-based preconditioner from \cite{cutajar2016preconditioning}. Between our two candidate preconditioners, at least one provides a performance boost over non-preconditioned conjugate gradients for every dataset and parameter setting chosen. Such a claim cannot be said about any pair of preconditioners in \cite{cutajar2016preconditioning}.

Since we have two quality preconditioners, each performing admirably in opposing parameter regimes, we might hope to get the best of both worlds by forecasting via Algorithm \ref{alg:pick} which one will perform better than using no preconditioner and solving the system with that resulting preconditioner. This approach does quite well, as we can see in Figure \ref{fig:geo}. While Algorithm \ref{alg:pick} does not always pick the best preconditioner in terms of minimizing the number of conjugate gradients iterations, it never selects a preconditioner which performs worse than using no preconditioner. That said, a preconditioner resulting in an exactly minimal number of iterations is chosen over 80\% of the time if the `use no preconditioner' option is included, and over 40\% of the time the preconditioner ranking induced by our stability estimates exactly corresponds to the ranking induced by the true iteration count. If we exclude the `use no preconditioner' option, which corresponds to an a-priori understanding that at least one of the geometric preconditioners works well, the former statistic jumps from 80\% to an impressive 98.1\%. This `all blue' plot which represents a robust preconditioner regime can not be found using the techniques of \cite{cutajar2016preconditioning}. Moreover, the algorithm was able to return the advice `use no preconditioner' in the face of uncertainty instead of suggesting the use of a poor preconditioner. This fact alone is highly desirable for the practitioner.

To confirm the importance of this paper, it is necessary to show that our method performs well when the computationally simple accuracy method does not. As mentioned when detailing the construction of these preconditioners, the accuracy criterion would never choose the `use no preconditioner' option over one of the geometric preconditioner. If we were just looking at the purely block-diagonal geometric preconditioner versus the `use no preconditioner option', the accuracy criterion would result in a poor preconditioner (higher number of iterations than possible) exactly a third of the time with the \verb+Concrete+ dataset. Of these times that the accuracy method fails, the estimated stability criterion succeeds exactly half of the time. For the \verb+Power+ dataset, the accuracy method fails 44.4\% of the time, but our estimated stability criterion succeeds in a quarter of these cases. While this behavior is not universal, it indicates that our method can be a crucial help when standard tools fail.

Finally, it is important to point out that in this setting, Algorithm \ref{alg:pick} performed computation commensurate with taking 30 steps of conjugate gradients. Since in over half of the parameter-dataset pairs the non-preconditioned conjugate gradients algorithm took more than five times this number of iterations, and our method can in most situations reduce that full-solution cost significantly, this initial cost is acceptable.

\section{Conclusions}
\label{sec:conclusions}

We have created a method to make the conjugate gradients algorithm friendlier to the practitioner. In particular, we took a quality forecast of preconditioner quality previously thought of as unusable in this arena, preconditioner stability, and presented a randomized algorithm which can quickly compute this quantity and use it to select a quality preconditioner. Our methods are motivated and justified heavily by theory and backed up by empirical evidence which suggests its applicability in the real world. In particular, Algorithm \ref{alg:pick} allowed us to create the first practical preconditioning system for kernel regression which is reported to never run fewer iterations than using no preconditioner at all in standard experiments; this is accomplished with surprisingly little leg-work.

Our work raises some important theoretical questions which would be ripe for future work. Most notably, it would be helpful to determine the fundamental limits of using preconditioner stability as a proxy for preconditioner quality with regard to different iterative methods. For example, it would be interesting to know if there are arbitrarily long sequences of preconditioners for some fixed positive definite matrix for which the order induced by the preconditioner-stability criterion is the opposite of the order induced by the condition number.

Finally, we note that our framing in Section \ref{sub:computing_stability} allows us to interpret the problem of finding a good preconditioner as a multi-armed bandit problem. Algorithms and lower bounds from this research area -- see \cite{jamieson2013finding} for example -- would then be informative to our problem of interest. As before, this investigation is left for future work.

\section*{Acknowledgments} We would like to thank Nicholas Pippenger for fruitful discussions early in the process of this research, as well as Karl Wimmer for helping us understand the main result in \cite{wimmer2014optimal}. We also appreciate the support of Harvey Mudd College in the form of computational resources used during this work.

\bibliographystyle{siamplain}
\bibliography{references}
\end{document}